\theoremstyle{plain}
\newtheorem{theorem}{Theorem} [section] 
\newtheorem{lemma}[theorem]{Lemma}
\newtheorem{proposition}[theorem]{Proposition}
\newtheorem{corollary}[theorem]{Corollary}
\theoremstyle{definition}
\newtheorem{definition}[theorem]{Definition}
\theoremstyle{remark}
\newtheorem*{remark}{Remark}
\newtheorem*{question}{Question}
\newtheorem{example}[theorem]{Example}
\newcommand{\inter}{\mathcal{I}}
\newcommand{\lattice}{\mathcal{L}}
\newcommand{\Q}{\mathbb{Q}}
\newcommand{\Z}{\mathbb{Z}}
\newcommand{\rk}{\text{rk}}
\title[On intersection forms of definite 4-manifolds with boundary]{On intersection forms of definite 4-manifolds bounded by a rational homology 3-sphere}
\author{Dong Heon Choe}
\address{Department of Mathematical Science, Seoul National University, Seoul 08826, Republic of Korea}
\email{honey8276@snu.ac.kr}
\author{Kyungbae Park}
\address{School of Mathematics, Korea Institute for Advanced Study, Seoul 02455, Republic of Korea}
\email{kbpark@kias.re.kr}
\urladdr{newton.kias.re.kr/~kbpark}
\keywords{Smooth 4-manifolds; intersection forms; spherical 3-manifolds; integral lattices}
\subjclass[2010]{57M27, 57N13, 57R58}
\begin{document}

\begin{abstract}
We show that, if a rational homology 3-sphere $Y$ bounds a positive definite smooth 4-manifold, then there are finitely many negative definite lattices, up to the stable-equivalence, which can be realized as the intersection form of a smooth 4-manifold bounded by $Y$. To this end, we make use of constraints on definite forms  bounded by $Y$ induced from Donaldson's diagonalization theorem, and correction term invariants due to Fr\o yshov, and Ozsv\'ath and Szab\'o. In particular, we prove that all spherical 3-manifolds satisfy such finiteness property. 
\end{abstract}
\maketitle

\section{Introduction}
Throughout this paper we assume that all manifolds are compact and oriented. We say a $4$-manifold $X$ \emph{is bounded by} a $3$-manifold  $Y$ if $Y$ is homeomorphic to the boundary of $X$ and the orientation of $Y$ inherits the orientation of $X$ in the standard way. 

The intersection pairing $Q_X$ on $H_2(X;\Z)/Tors$ of a 4-manifold $X$ is an integer-valued, symmetric, bilinear form over $\Z^{b_2(X)}$. If the boundary of $X$ is a rational homology 3-sphere or empty, $Q_X$ is nondegenerate. In this case, the intersection form $Q_X$ algebraically forms a lattice. In particular, if $X$ is closed, then $Q_X$ is unimodular, i.e.\ $\det(Q_X)=\pm1$.  

The remarkable works of Donaldson and Freedman in the early 1980's portray a big difference between topological and smooth categories in dimension 4 for the answer to the following question:
\begin{question}
	Which negative definite, unimodular lattices are realized as the intersection form of a closed 4-manifold?
\end{question} 
Freedman showed that any unimodular definite lattice can be realized as the intersection form of a closed, \emph{topological} 4-manifold \cite{Freedman:1982}. On the other hand, Donaldson's diagonalization theorem asserts that the only standard diagonal one can be realized as the intersection form of a closed, \emph{smooth} 4-manifold  \cite{Donaldson:1983, Donaldson:1987}.

In this paper, we would like to study this phenomenon for 4-manifolds with a fixed boundary. We say a lattice $\Lambda$ \emph{is smoothly (resp. topologically) bounded by} a 3-manifold $Y$ if $\Lambda$ can be realized as the intersection form of a smooth (resp. topological) 4-manifold with the boundary $Y$. It follows easily by connected summing $\overline{\mathbb{CP}^2}$ to a 4-manifold realizing $\Lambda$ that if $Y$ bounds a lattice $\Lambda$, then it also does $\Lambda\oplus\langle-1\rangle$. We call this procedure the \emph{stabilization} of $\Lambda$.
\begin{definition}
	Two negative definite lattices $\Lambda_1$ and $\Lambda_2$ are \emph{stable-equivalent} if $\Lambda_1\oplus\langle-1\rangle^m\cong\Lambda_2\oplus\langle-1\rangle^n$ for some non-negative integers $m$ and $n$.
\end{definition}
Let $\inter(Y)$ (resp. $\inter^{TOP}(Y)$) denote the set of all negative definite lattices that can be smoothly (resp. topologically) bounded by $Y$, up to the stable-equivalence. In terms of these notations, aforementioned Freedman and Donaldson's results can be interpreted as 
\[
	\inter^{TOP}(S^3)=\{[\Lambda]\mid\Lambda\colon \text{any unimodular negative definite lattice}\}\\
\]
and
\[
	\inter(S^3)=\{[\langle-1\rangle]\}.
\]

Following the Freedman's results, Boyer studied a realization problem for topological 4-manifolds with a fixed boundary $Y$ \cite{Boyer:1986-1}. Roughly speaking, any forms presenting the linking pairing of $Y$ can be realized. Hence one can easily observe the following:
\begin{theorem}
	Let $Y$ be a rational homology 3-sphere. Then 
	\[|\inter^{TOP}(Y)|=\infty.\] 
\end{theorem}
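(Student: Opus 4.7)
The plan is to use Boyer's realization theorem to obtain one negative definite lattice bounded by $Y$, then connect-sum with Freedman's closed topological $-E_8$-manifold arbitrarily many times to produce an infinite family, and finally invoke uniqueness of orthogonal decomposition of definite integral lattices to distinguish their stable-equivalence classes.

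A standard lattice-theoretic fact --- obtained by taking any symmetric integer matrix presenting the linking form of $Y$ and then stabilizing with $\langle -1\rangle$-summands until the signature becomes negative --- ensures that the linking form of $Y$ is realized as the discriminant form of some negative definite integral lattice. Applying Boyer's theorem to such a lattice yields a topological 4-manifold $X_0$ with $\partial X_0 = Y$ and negative definite intersection form $\Lambda_0$. Let $M$ be Freedman's closed topological 4-manifold with intersection form $-E_8$, and for each $k \ge 0$ set
\[
X_k := X_0 \,\#\, \underbrace{M\#\cdots\# M}_{k\text{ copies}}.
\]
Then $X_k$ is a topological 4-manifold bounded by $Y$ with negative definite intersection form $\Lambda_k := \Lambda_0 \oplus (-E_8)^{\oplus k}$, so $[\Lambda_k] \in \inter^{TOP}(Y)$.

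The main algebraic step is to show that the classes $[\Lambda_k]$ are pairwise distinct. Suppose $\Lambda_k \oplus \langle -1\rangle^m \cong \Lambda_j \oplus \langle -1\rangle^n$ for some nonnegative integers $m,n$. By the classical uniqueness of orthogonal decomposition into indecomposable summands for definite integral lattices (due to Eichler), the multiplicity of each indecomposable as a summand is an isomorphism invariant. Since $-E_8$ is indecomposable and not isomorphic to $\langle -1\rangle$, the multiplicity of $-E_8$ on the left equals $k$ plus the multiplicity of $-E_8$ in $\Lambda_0$, and similarly with $j$ on the right. Hence $k=j$, the $[\Lambda_k]$ are pairwise distinct, and $|\inter^{TOP}(Y)| = \infty$. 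The main hurdle is the invocation of this uniqueness theorem; granted this, the result follows directly from Boyer's theorem and the existence of Freedman's $-E_8$-manifold.
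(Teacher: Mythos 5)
Your overall strategy is the same as the paper's: realize one definite lattice bounded by $Y$ by applying Boyer's theorem to a definite presentation of the linking form, then connect-sum with closed topological $4$-manifolds having non-standard definite forms (you use Freedman's $-E_8$-manifold) and distinguish the resulting stable classes. Your appeal to Eichler's uniqueness of orthogonal decomposition to show the classes $[\Lambda_k]$ are pairwise distinct is a welcome detail that the paper leaves implicit.

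There is, however, a genuine gap in your first step. You claim that a negative definite lattice presenting the linking form of $Y$ can be obtained from an arbitrary symmetric integer presentation ``by stabilizing with $\langle-1\rangle$-summands until the signature becomes negative.'' This does not work: adding a $\langle-1\rangle$-summand increases the rank by $1$ and decreases the signature by $1$, so the quantity $\mathrm{rk}+\mathrm{sign}$ (twice the number of positive eigenvalues) is unchanged. An indefinite presentation therefore stays indefinite under any number of such stabilizations; making the signature negative is not the same as making the form negative definite. The existence of a definite form presenting a given linking pairing is a genuine theorem --- the paper cites Edmonds [Section 6] for exactly this --- and it is precisely the input needed before Boyer's realization theorem can be invoked. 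With that reference substituted for your stabilization argument, the remainder of your proof is correct and matches the paper's.
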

\begin{proof}
	For the linking pairing of a 3-manifold $Y$, it is known by Edmonds \cite[Section 6]{Edmonds} that there is a definite form presenting it. This form is realized by a topological 4-manifold $W$ by Boyer's result. Now, we have infinitely many  definite lattices bounded by $Y$, up to the stabilization, by connected summing $W$ and closed topological 4-manifolds with non-standard definite intersection forms.
\end{proof}

The main interest of this article is whether the finiteness of $\inter(Y)$ holds for any rational homology 3-sphere $Y$, similarly to the case of $S^3$. Although we conjecture that $\inter(Y)$ is finite for any rational homology 3-sphere $Y$, it has been known only for manifolds with small \emph{correction term} invariant. Recall that the correction term is a rational-valued invariant for rational homology 3-spheres, due to Fr\o yshov in Seiberg-Witten theory \cite{Froyshov:1996} and Ozsv\'ath and Szab\'o in Heegaard Floer theory \cite{Ozsvath-Szabo:2003}. The correction term is known to give constraints on definite lattices smoothly bounded by $Y$. Let $\Lambda$ be a negative definite lattice bounded by $Y$. Then the following inequality is satisfied
\[
	\delta(\Lambda)\leq d(Y),
\]
where $\delta$ is a rational-valued invariant for definite lattices defined in \cite{Elkies}: see also Section \ref{sec:preliminary}. On the other hand, the finiteness of the number of stable classes of definite unimodular lattices are known only for small $\delta\leq6$, purely algebraically \cite{Elkies, Elkies2, Gaulter, Nebe_Venkov}. Therefore, we can conclude that $\inter(Y)$ is finite for any integral homology 3-sphere $Y$ with $d(Y)\leq6$. For rational homology spheres with sufficiently small $\delta$, we deduce a similar finiteness of $\inter(Y)$ from the result of Owens and Strle in \cite{Owens_Strle} about non-unimodular lattices. Our main results are that $\inter(Y)$ is finite under some additional conditions.
\begin{theorem}\label{thm:finite}
	Let $Y_1$ and $Y_2$ be rational homology 3-spheres. If there is a negative definite smooth cobordism from $Y_1$ to $Y_2$ and $|\inter(Y_2)|<\infty$, then  $|\inter(Y_1)|<\infty$.
\end{theorem}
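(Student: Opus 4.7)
The plan is to transport the finiteness of $\inter(Y_2)$ back to $\inter(Y_1)$ by gluing. Given any smooth $X_1$ with $\partial X_1 = Y_1$ realizing a representative of $[\Lambda_1] \in \inter(Y_1)$, I form $X_2 := X_1 \cup_{Y_1} W$, a smooth 4-manifold with $\partial X_2 = Y_2$. A Mayer--Vietoris calculation with $\Z$ coefficients, using that $Y_1$ is a rational homology 3-sphere (so $H_2(Y_1;\Z)=0$ and $H_1(Y_1;\Z)$ is finite), yields a short exact sequence
\[
0 \to H_2(X_1;\Z) \oplus H_2(W;\Z) \to H_2(X_2;\Z) \to K \to 0,
\]
where $K$ is a subgroup of $H_1(Y_1;\Z)$. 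Consequently $Q_{X_1} \oplus Q_W$ embeds in $Q_{X_2}$ as a finite-index sublattice of index $|K| \leq |H_1(Y_1;\Z)|$, and since both $Q_{X_1}$ and $Q_W$ are negative definite, so is $Q_{X_2}$. Hence $[Q_{X_2}] \in \inter(Y_2)$, which is a finite set by hypothesis.

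Next I would show that the fibers of the correspondence $[\Lambda_1] \mapsto [Q_{X_2}]$ are finite. The determinant identity
\[
|\det \Lambda_1| \cdot |\det Q_W| \;=\; |K|^2 \cdot |\det Q_{X_2}|,
\]
combined with $|K|\leq|H_1(Y_1;\Z)|$ and the fact that $|\det Q_{X_2}|$ takes only finitely many values as $[Q_{X_2}]$ ranges over $\inter(Y_2)$, produces a uniform upper bound on $|\det \Lambda_1|$. Together with the correction-term inequality $\delta(\Lambda_1)\leq d(Y_1)$, this controls $\Lambda_1$ via two quantitative invariants.

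The argument then finishes by invoking a lattice-theoretic finiteness statement: among negative definite integer lattices with bounded $|\det|$ and bounded $\delta$, only finitely many stable-equivalence classes occur. In each fixed rank, Hermite--Minkowski gives finiteness of isomorphism classes with bounded determinant; a bound on the rank for a ``reduced'' representative (with no $\langle -1\rangle$ summand) should follow by combining the $\delta$-bound with the Elkies-type inequalities recalled in the introduction, or, in the non-unimodular case, by the results of Owens and Strle. These inputs give $|\inter(Y_1)| < \infty$.

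\textbf{Main obstacle.} The essential difficulty is that $Q_{X_2}$ is in general a proper over-lattice of $Q_{X_1}\oplus Q_W$ (the index $|K|$ can be nontrivial, e.g.\ whenever $H_1(X_1)$ and $H_1(W)$ both fail to detect a class in $H_1(Y_1)$), so one cannot directly invert the gluing on stable classes by a Krull--Schmidt cancellation applied to $[\Lambda_1]\mapsto [\Lambda_1\oplus Q_W]$. The workaround is to pass through the quantitative invariants $|\det|$ and $\delta$, which is why the proof must genuinely use the Fr\o yshov/Ozsv\'ath--Szab\'o correction term and not merely the gluing construction.
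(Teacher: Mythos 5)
Your gluing construction, the finite-index embedding $Q_{X_1}\oplus Q_W\hookrightarrow Q_{X_2}$, and the resulting bounds on $|\det\Lambda_1|$ and $\delta(\Lambda_1)$ are all correct and match the first half of the paper's argument. The genuine gap is your final step: the assertion that there are only finitely many stable-equivalence classes of negative definite lattices with bounded determinant and bounded $\delta$ is not a known theorem. As the introduction of the paper itself points out, in the unimodular case this finiteness is established only for $\delta\leq 6$ (Elkies, Gaulter, Nebe--Venkov), and in the non-unimodular case only for sufficiently small $\delta$ via Owens--Strle; for an arbitrary constant $C$ it is open. The obstruction is that neither $|\det|$ nor $\delta$ is known to bound the rank of a representative with no $\langle-1\rangle$ summand, and Lemma~\ref{lem:finite} (the Hermite--Minkowski input you correctly cite) only becomes applicable once such a rank bound is in hand. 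So the two quantitative invariants you propose to ``pass through'' are not sufficient, and your hedge that the rank bound ``should follow'' from Elkies-type inequalities is precisely where the argument breaks.

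The paper's proof retains exactly the datum you discard after using it for the determinant: the embedding $Q_W\oplus\Lambda_1\hookrightarrow\Gamma\oplus\langle-1\rangle^N$, where $\Gamma$ runs over the finitely many representatives of classes in $\inter(Y_2)$. Theorem~\ref{thm:alg} shows that this embedding condition, combined with $\delta(\Lambda_1)\leq d(Y_1)$ and $\det(\Lambda_1)\mid |H_1(Y_1;\Z)|$, does force a rank bound: after splitting off the orthogonal complement of $Q_W$ (controlled by Lemma~\ref{lem:embedding}) and reducing to prime-index embeddings, one chooses a basis adapted to the embedding into $\langle-1\rangle^n\oplus E$ and applies Lemma~\ref{lemma:app} to produce a characteristic covector $\xi$ with $|\xi\cdot\xi|\leq n/3+O(1)$, whence $\delta(\Lambda_1)\geq n/6-O(1)$ and $n\leq 6C+O(1)$. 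That rank bound, extracted from the embedding rather than from $\delta$ and $\det$ alone, is the heart of the proof and is the missing ingredient in your proposal.
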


Since we know $|\inter(S^3)|<\infty$ by Donaldson's diagonalization theorem, we have the following corollary.

\begin{corollary}\label{cor:finite}
	Let $Y$ be a rational homology 3-sphere. If $Y$ bounds a positive (resp. negative) definite smooth 4-manifold, then there are only finitely many negative (resp. positive) definite lattices, up to the stable-equivalence, which can be realized as the intersection form of a smooth 4-manifold bounded by $Y$. 
	
	In other words, if $\inter(-Y)\neq\emptyset$, then $|\inter(Y)|<\infty$.
\end{corollary}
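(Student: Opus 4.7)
The plan is to reduce the corollary directly to Theorem \ref{thm:finite} by taking $Y_2 = S^3$. As recalled in the introduction, Donaldson's diagonalization theorem gives $\inter(S^3)=\{[\langle-1\rangle]\}$, and in particular $|\inter(S^3)|=1<\infty$. So, under the hypothesis $\inter(-Y)\neq\emptyset$, it suffices to exhibit a negative definite smooth cobordism from $Y$ to $S^3$ and then feed the result into Theorem \ref{thm:finite}.

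To build such a cobordism, I would start from a negative definite smooth 4-manifold $W$ with $\partial W = -Y$, whose existence is precisely the assumption $\inter(-Y)\neq\emptyset$ (equivalently, $-W$ is a positive definite 4-manifold bounded by $Y$). Then remove an open 4-ball from the interior of $W$ and let $X$ denote the resulting 4-manifold. With the standard orientation conventions one has $\partial X = -Y \sqcup S^3$, so $X$ is a smooth cobordism from $Y$ to $S^3$. Puncturing the interior neither alters $H_2$ nor the intersection pairing, hence $Q_X \cong Q_W$ is still negative definite. Applying Theorem \ref{thm:finite} to $Y_1 = Y$ and $Y_2 = S^3$ then yields $|\inter(Y)|<\infty$, which is the "in other words" formulation of the corollary; the parenthetical positive-definite statement follows by replacing $Y$ with $-Y$ throughout.

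Since this argument is essentially a formal specialization of Theorem \ref{thm:finite}, I do not anticipate any real obstacle. The only point demanding a moment of attention is tracking orientations in the puncture construction so that the boundary components appear in the configuration required by the cobordism convention used in Theorem \ref{thm:finite}. All of the mathematical content lives in Theorem \ref{thm:finite} itself; the corollary simply isolates the case $Y_2 = S^3$, where finiteness is supplied by Donaldson's theorem.
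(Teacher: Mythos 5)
Your proposal is correct and matches the paper's own argument: puncture a positive definite $4$-manifold bounded by $Y$ (equivalently, reverse the orientation of a negative definite one bounded by $-Y$) to obtain a negative definite cobordism from $Y$ to $S^3$, then invoke Theorem \ref{thm:finite} together with $|\inter(S^3)|=1$ from Donaldson's theorem. No issues.
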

\begin{proof}
	Let $W$ be a positive definite smooth 4-manifold with the boundary $Y$. Then we construct a negative definite cobordism from $Y$ to $S^3$ by reversing the orientation of the punctured $W$.
\end{proof} 

\begin{remark}
	We say a 4-manifold $X$ is negative (resp. positive) definite if $b_2(X)=b_2^-(X)$ (resp. $b_2(X)=b_2^+(X)$). In particular, a 4-manifold with $b_2=0$ is considered to be both positive and negative definite.
\end{remark}

In \cite[Corollary 0.4]{Boyer:1986-1}, Boyer showed that there are only finitely many homeomorphism types of simply-connected 4-manifolds which have given intersection form and boundary. Our results give a bit of answer to the geography problem of simply-connected, smooth 4-manifolds with a fixed boundary.

\begin{corollary}
	If $Y$ bounds a positive (resp. negative) definite smooth 4-manifold, then there are finitely many homeomorphism types of simply-connected, negative (resp. positive) definite smooth 4-manifolds bounded by $Y$, up to the stabilization.
\end{corollary}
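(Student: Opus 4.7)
The plan is to combine Corollary~\ref{cor:finite} with the finiteness result of Boyer \cite[Corollary~0.4]{Boyer:1986-1} that is quoted just above. By Corollary~\ref{cor:finite} the set $\inter(Y)$ is finite; choose a finite list of representatives $\Lambda_1,\ldots,\Lambda_k$ of its stable-equivalence classes. For any simply-connected negative definite smooth 4-manifold $X$ bounded by $Y$, we have $[Q_X]=[\Lambda_i]$ for some $i$, i.e., there exist integers $a,b\geq 0$ with $Q_X\oplus\langle-1\rangle^a\cong\Lambda_i\oplus\langle-1\rangle^b$. Because a single $\overline{\mathbb{CP}^2}$ summand contributes $\langle-1\rangle$ to the intersection form, the 4-manifold $X\#a\overline{\mathbb{CP}^2}$ is a stabilization of $X$ (as a 4-manifold) whose intersection form is exactly $\Lambda_i\oplus\langle-1\rangle^b$. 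Thus, modulo the stabilization of 4-manifolds, every candidate $X$ has a representative whose intersection form is of the form $\Lambda_i\oplus\langle-1\rangle^b$ for some $i\in\{1,\ldots,k\}$ and some $b\geq 0$.

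For each pair $(i,b)$ let $A_b^{(i)}$ denote the set of homeomorphism types of simply-connected 4-manifolds bounded by $Y$ with intersection form $\Lambda_i\oplus\langle-1\rangle^b$, which is finite by Boyer's Corollary~0.4. The operation $X\mapsto X\#\overline{\mathbb{CP}^2}$ defines maps $A_b^{(i)}\to A_{b+1}^{(i)}$, and the set of stabilization-classes of 4-manifolds whose intersection form lies in $[\Lambda_i]$ is precisely the direct limit $\operatorname{colim}_b A_b^{(i)}$. Since any two stabilization-inequivalent classes must have distinct images in every $A_B^{(i)}$ in which both are represented, finiteness of this colimit follows from a uniform bound $|A_b^{(i)}|\leq C$ that is independent of $b$. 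Such a uniform bound is supplied by Boyer's classification: for a rational homology 3-sphere boundary, a simply-connected topological 4-manifold is determined by its intersection form together with an auxiliary (Kirby--Siebenmann-type) invariant taking values in a finite set depending only on $Y$. Summing over the $k$ stable classes then gives the desired finiteness.

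The main obstacle will be extracting the uniform bound $|A_b^{(i)}|\leq C$ cleanly from Boyer's apparatus; if one wishes to avoid committing to that route, an alternative is to argue directly that for fixed $i$ the stabilization maps $A_b^{(i)}\to A_{b+1}^{(i)}$ are eventually bijections, which also yields finiteness of the colimit. Either way, once a per-stable-class bound is in place, the conclusion is immediate by summing over $i=1,\ldots,k$, and the negative definite case follows from the positive definite one by reversing orientations.
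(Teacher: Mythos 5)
Your proposal takes essentially the same route as the paper, which states this corollary without proof as an immediate combination of Corollary~\ref{cor:finite} with Boyer's finiteness result. The uniform-bound issue you flag is a real subtlety that the paper silently glosses over, but it is resolvable exactly along the lines you sketch: in Boyer's classification the auxiliary data beyond the intersection form is governed by the discriminant group and linking-pairing presentation, and since $\langle-1\rangle$ is unimodular these are unchanged when passing from $\Lambda_i$ to $\Lambda_i\oplus\langle-1\rangle^b$, giving a bound on $|A_b^{(i)}|$ independent of $b$.
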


To prove our main theorem, we consider a set of lattices $\lattice$ defined purely algebraically in terms of invariants of a given 3-manifold $Y$. This set $\lattice$ contains negative definite lattices, up to the stable-equivalence, which satisfy the conditions for a definite lattice to be smoothly bounded by $Y$, induced from the correction term invariants and fundamental obstructions from the algebraic topology. See Section \ref{sec:restrictions} for details of these conditions. Then Theorem \ref{thm:finite} readily follows after we show the finiteness of $\lattice$.

\begin{theorem}\label{thm:alg}
	Let $\Gamma_1$ and $\Gamma_2$ be fixed negative definite lattices, and $C>0$ and $D\in\Z$ be constants. Define  $\lattice(\Gamma_1,\Gamma_2; C, D)$ to be the set of negative definite lattices $\Lambda$, up to the stable-equivalence, satisfying the following conditions:
	\begin{itemize}
		\item $det(\Lambda)=D,$
		\item $\delta(\Lambda)\leq C,$ and
		\item $\Gamma_1\oplus\Lambda$ embeds into $\Gamma_2\oplus\langle-1\rangle^N$, $N=\rk (\Gamma_1)+\rk(\Lambda)-\rk(\Gamma_2).$
	\end{itemize}
	Then $\lattice(\Gamma_1,\Gamma_2; C, D)$ is finite.
\end{theorem}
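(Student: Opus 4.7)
The plan is to reduce via stable equivalence to the case when $\Lambda$ contains no vector of square $-1$, bound the rank of such a reduced $\Lambda$ by combining the embedding condition (reflecting Donaldson's diagonalization) with the $\delta$-bound (the Fr\o yshov / Ozsv\'ath-Szab\'o inequality), and finally apply classical finiteness of integer lattices of bounded rank and discriminant.

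\textbf{Reduction to reduced lattices.} Absorbing any $\langle -1\rangle$-summands of $\Gamma_1,\Gamma_2$ into the stabilizing factor $\langle -1\rangle^N$, I may assume both $\Gamma_1$ and $\Gamma_2$ are \emph{reduced}, i.e.\ contain no vector of square $-1$. Writing $\Lambda=\Lambda_0\oplus\langle -1\rangle^k$ with $\Lambda_0$ reduced, the $-1$-vectors of $\Gamma_2\oplus\langle -1\rangle^N$ are exactly $\pm e_1,\ldots,\pm e_N$; so the $k$ pairwise orthogonal $-1$-vectors coming from the $\langle -1\rangle^k$ summand of $\Lambda$ must map to $k$ distinct $\pm e_{i_j}$'s, and taking their orthogonal complement yields an embedding $\Gamma_1\oplus\Lambda_0\hookrightarrow\Gamma_2\oplus\langle -1\rangle^{N-k}$ of the required form. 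Since both $\delta$ and $|\det|$ are unchanged under $\langle -1\rangle$-stabilization, it suffices to prove finiteness among reduced $\Lambda$.

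\textbf{Bounding the rank.} The hypothesis $\delta(\Lambda)\le C$ is equivalent to the lower bound $|\chi|^2\ge\rk(\Lambda)-4C$ for every characteristic $\chi\in\Lambda^\ast$, so it suffices to produce a characteristic element of $\Lambda^\ast$ whose norm is smaller than $\rk(\Lambda)$ by a uniform positive fraction of $\rk(\Lambda)$. Set $L:=\Gamma_2\oplus\langle -1\rangle^N$ and let $M:=\Gamma_1^\perp\subseteq L$, so that $\Lambda\subseteq M$ is a finite-index sublattice of equal rank. The natural characteristic $\xi_L=\xi_{\Gamma_2}+(1,\ldots,1)$ of $L$ projects orthogonally onto $M\otimes\mathbb{Q}$ to give an element of $M^\ast\subseteq\Lambda^\ast$ that is characteristic for $\Lambda$, of norm at most $|\xi_{\Gamma_2}|^2+N$ (comparable to $\rk(\Lambda)$). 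The strict improvement comes from translating this through the finite discriminant quotient $\Lambda^\ast/M^\ast$, whose order $[M:\Lambda]$ is uniformly bounded by the determinantal identity $|\det\Gamma_1|\cdot|D|=[L:\Gamma_1\oplus\Lambda]^2\cdot|\det\Gamma_2|$. Choosing a short representative of a nontrivial coset produces a characteristic $\chi_0\in\Lambda^\ast$ with $|\chi_0|^2\le(1-c)\rk(\Lambda)+O(1)$ for some $c=c(\Gamma_1,\Gamma_2,|D|)>0$, and combining this with the $\delta$-bound forces $\rk(\Lambda)\le F(\Gamma_1,\Gamma_2,C,D)$.

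\textbf{Classical finiteness.} Given that $\rk(\Lambda)$ is bounded and $|\det(\Lambda)|=|D|$, standard lattice theory implies finiteness of isomorphism classes of $\Lambda$: there are finitely many genera of integral lattices of a given rank and discriminant, and by the Minkowski--Siegel mass formula (equivalently Kneser's strong approximation) each genus contains only finitely many classes. This completes the finiteness of $\lattice(\Gamma_1,\Gamma_2;C,D)$.

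The main obstacle is the middle step: making precise the shortening of the ambient characteristic by the finite discriminant quotient so that the deficit is a uniform positive fraction $c\cdot\rk(\Lambda)$. This is where all three hypotheses---the equal-rank embedding, the fixed determinant, and the $\delta$-bound---must be exploited together, and I expect the bulk of the technical work to lie in quantifying the short-characteristic construction (likely via an explicit analysis of the discriminant quadratic form on $\Lambda^\ast/\Lambda$ relative to that of $L$).
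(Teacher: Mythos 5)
Your outline follows the same architecture as the paper's proof: reduce to $\Lambda$ with no square $-1$ vectors, bound $\rk(\Lambda)$ by producing a characteristic covector whose norm falls short of $\rk(\Lambda)$ by a definite fraction of $\rk(\Lambda)$, and finish with the classical finiteness of definite lattices of bounded rank and determinant (Lemma \ref{lem:finite}). Your identification of $M=\Gamma_1^\perp$ and the observation that $[M:\Lambda]$ is controlled by $\det\Gamma_1$, $\det\Gamma_2$ and $D$ also matches the paper, which packages the complement as $\langle-1\rangle^n\oplus E$ with $[E]$ ranging over the finite set of Lemma \ref{lem:embedding}.

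However, there is a genuine gap at exactly the point you flag yourself: the claim that one can find a characteristic $\chi_0\in\Lambda^*$ with $|\chi_0|^2\le(1-c)\rk(\Lambda)+O(1)$ for a uniform $c>0$ is asserted, not proved, and the mechanism you propose (``choosing a short representative of a nontrivial coset'' of $\Lambda^*/M^*$) does not by itself yield a deficit growing linearly in the rank --- a single coset translate a priori only changes the norm by a bounded amount. The paper's actual content here is twofold. First, it reduces to the case where $\Lambda\hookrightarrow\langle-1\rangle^n\oplus E$ has \emph{prime} index $p$, by factoring an arbitrary finite-index embedding into a bounded-length chain of prime-index embeddings (an idea from Owens--Strle) and inducting, using $Char(E_i)\subset Char(\Lambda)$ to propagate the bound $\delta(E_i)\le C$. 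Second, for odd prime index it writes down an explicit basis $\{pe,\,e_1+s_1e,\dots,e_{n-1}+s_{n-1}e,\dots\}$, computes $|\xi\cdot\xi|=\frac{1}{p^2}\bigl(k^2+\sum_i(ks_i-pk_i)^2\bigr)$ plus a rank-independent term, and proves the averaging Lemma \ref{lemma:app}: summing $\sum_i(ks_i-pk_i)^2$ over the $p-1$ admissible odd values of $k$ and applying the pigeonhole principle produces a characteristic covector of norm at most $\frac{n+2}{3}$ plus a constant, i.e.\ $c=2/3$. That lemma is the entire quantitative heart of the theorem, and without it (or a substitute of comparable precision) the rank bound, and hence the finiteness statement, does not follow from your sketch.
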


Our proof of Theorem \ref{thm:alg} is highly inspired by the work of Owens and Strle in \cite{Owens_Strle}, where they studied non-unimodular lattices in terms of the  lengths of characteristic covectors. The key idea of our proof is to improve one of their inequalities on the length of a characteristic covector, enough to give an upper bound on the rank of the lattices in $\lattice(\Gamma_1, \Gamma_2; C, D)$.

\begin{remark}
	We remark that not all lattices which represent a stable class in $\inter(Y)$ can be realized as the intersection form of a smooth $4$-manifold bound by $Y$. It is well known that the Poincar\'e homology 3-sphere $\Sigma$, oriented as the boundary of the $-E_8$-plumbed $4$-manifold, can be obtained by $(-1)$-framed surgery along the left-handed trefoil knot. Hence $[\langle-1\rangle] = [\emptyset] \in \inter(\Sigma)$. Whereas one can prove, using a constraint from the Donaldson's diagonalization theorem, $\Sigma$ cannot bound any $4$-manifold with $b_2=0$, i.e.\ the empty lattice cannot be realized. See Example \ref{ex:poincare}.
\end{remark}

\subsection*{Seifert fibered spaces such that $\inter(Y)<\infty$}
It is interesting to ask which 3-manifolds satisfy the condition in Corollary \ref{cor:finite}, i.e.\ which 3-manifolds bound a positive definite smooth 4-manifold. Since many of 3-manifolds, including all Seifert fibered rational homology 3-spheres, bound a definite smooth 4-manifold up to the sign (see Proposition \ref{prop:both}), it is more reasonable to find families of 3-manifolds bounding definite smooth 4-manifolds of both signs.

It is well known that any lens space satisfy such property. Note that lens spaces can be obatained by double covering of $S^3$ branched along 2-bridge knots. As generalizing lens spaces to this direction, the double branched covers of $S^3$ along quasi-alternating links are known to bound definite smooth 4-manifolds of both signs (with trivial first homology) \cite[Proof of Lemma 3.6]{Ozsvath-Szabo:2005-1}. Also notice that lens spaces can be obtained by Dehn-surgery along the unknot. In \cite{Owens-Strle:2012-1}, Owens and Strle classified 3-manifolds obtained by Dehn-surgery on torus knots which can bound definite smooth 4-manifolds of both signs.

In Section \ref{sec:Seifert}, we consider this question for another class of 3-manifolds, Seifert-fibered rational homology 3-spheres, which also contains lens spaces. In particular, we completely determine which spherical 3-manifolds bound definite smooth 4-manifolds of both signs, and finally show that any spherical 3-manifold $Y$ has the property that $|\inter(Y)|<\infty$.

\begin{theorem}
	\label{thm:spherical}
	Let $Y$ be a spherical 3-manifold. Then, there are finitely many stable classes of negative definite lattices which can be realized as the intersection form of a smooth 4-manifold bounded by $Y$, i.e.\ $|\inter(Y)|<\infty.$
\end{theorem}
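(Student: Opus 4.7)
The strategy is to exploit Corollary~\ref{cor:finite} for as many spherical $Y$ as possible and to fall back on direct lattice-theoretic arguments for the residual exceptional cases. Every spherical 3-manifold has the form $Y=S^3/\Gamma$ for a freely acting finite subgroup $\Gamma\subset SO(4)$, and is Seifert-fibered over $S^2$ with at most three exceptional fibers; by Proposition~\ref{prop:both}, the associated star-shaped plumbing already exhibits $Y$ (or its reverse) as the boundary of a definite smooth 4-manifold.

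I would first work through the finite subgroups of $SO(4)$ (cyclic, binary dihedral, binary tetrahedral, binary octahedral, and binary icosahedral) and for each resulting $Y$ decide whether $Y$ also bounds a \emph{positive} definite smooth 4-manifold. For lens spaces this is classical; for the nonabelian cases one tries to reorient the star-shaped plumbing via Neumann's continued-fraction moves, to rationally blow down the plumbing so as to flip its definiteness, or to exhibit $Y$ as Dehn surgery on a suitably nice (e.g.\ slice) knot or link. Whenever $Y$ bounds a positive definite smooth filling, Corollary~\ref{cor:finite} immediately yields $|\inter(Y)|<\infty$.

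Second, I would treat the orientations of spherical $Y$ admitting no positive definite smooth filling. The Fr\o yshov obstruction applied to $-Y$ forces $d(Y,\mathfrak{s})>0$ for every Spin$^c$ structure $\mathfrak{s}$; using the Ozsv\'ath--Szab\'o formula for the correction terms of Seifert-fibered plumbings, this cuts the list down to a short explicit collection whose canonical example is the Poincar\'e homology 3-sphere $\Sigma(2,3,5)$ oriented as the boundary of the $-E_8$-plumbed 4-manifold. For each such exceptional $Y$, which turns out to be an integral homology 3-sphere with $d(Y)\leq 6$, every $\Lambda\in\inter(Y)$ is unimodular and satisfies $\delta(\Lambda)\leq 6$, so the classical Elkies--Gaulter--Nebe--Venkov finiteness of stable classes of negative definite unimodular lattices with $\delta\leq 6$ gives $|\inter(Y)|<\infty$.

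The main obstacle is the construction half of the first step: producing explicit positive definite smooth fillings for those nonabelian spherical quotients whose canonical Seifert plumbings are forced to be negative definite. I expect to handle these either by rational blow-downs of the star-shaped plumbing to flip its definiteness, or by recognizing $Y$ as Dehn surgery on a link in $S^3$ whose trace cobordism can be arranged to be positive definite; ruling out all other potential exceptional orientations in the second step will similarly require a careful enumeration of correction terms across all Spin$^c$ structures, rather than just the unique one available for $\Sigma(2,3,5)$.
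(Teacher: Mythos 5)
Your first step follows the paper's strategy: Proposition~\ref{prop:spherical} shows that all spherical 3-manifolds except $T_1=M(-2;(2,1),(3,2),(3,2))$, $O_1=M(-2;(2,1),(3,2),(4,3))$, $I_1=\Sigma(2,3,5)$ and $I_7=M(-2;(2,1),(3,2),(5,3))$ bound definite fillings of both signs, so Corollary~\ref{cor:finite} disposes of those. The gap is in your treatment of the exceptional cases. You assert that each exceptional $Y$ ``turns out to be an integral homology 3-sphere,'' so that every lattice in $\inter(Y)$ is unimodular and the Elkies--Gaulter--Nebe--Venkov finiteness applies. This is false: only $I_1$ is an integral homology sphere. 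Computing $|H_1|=|a_1a_2a_3\,e(M)|$ gives $H_1(T_1)\cong\Z/3$, $H_1(O_1)\cong\Z/2$ and $H_1(I_7)\cong\Z/7$. The lattices these manifolds bound need not be unimodular, so the classification of stable classes of unimodular definite lattices with small $\delta$ says nothing about them, and no purely algebraic finiteness statement of that kind is available for non-unimodular lattices with fixed determinant and bounded $\delta$ alone --- that is precisely why the paper needs the embedding hypothesis in Theorem~\ref{thm:alg}.

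The paper closes these three cases differently: it observes that the canonical plumbed 4-manifold of each of $T_1$ and $O_1$ embeds into the $-E_8$-plumbed 4-manifold bounded by $\Sigma$ (and that of $I_7$ embeds after one blow-up), so that the complement is a negative definite cobordism from the exceptional manifold to $\Sigma$. Since $|\inter(\Sigma)|\leq 15$ by Elkies' result (here unimodularity \emph{is} available because $\Sigma$ is an integral homology sphere with $d(\Sigma)=2$), Theorem~\ref{thm:finite} then yields finiteness for $T_1$, $O_1$ and $I_7$. To repair your argument you would need either these explicit cobordisms to $\Sigma$, or some other input playing the role of the embedding condition in Theorem~\ref{thm:alg}; the unimodular lattice classification alone cannot do the job for $T_1$, $O_1$ and $I_7$.
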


\subsection*{Further questions}

\subsubsection*{Generalizing our results to 3-manifolds with $b_1>0$} Note that a 3-manifold with $b_1>0$ might bound a 4-manifold with degenerate intersection form. Hence, we generalize $\inter(Y)$ to be the set of stable classes of lattices which can be realized as the maximal nondegenerate subspace of the intersection form of a negative semi-definite 4-manifolds bounded by a 3-manifold $Y$. 

On the other hand, the correction term invariants for rational homology 3-spheres has been generalized for some 3-manifolds with $b_1>0$ by Levine and Ruberman in \cite{Levine-Ruberman:2014}, and by Behrens and Golla in \cite{Behrens-Golla:2018-1}. These generalized correction term invariants also give restrictions on semi-definite intersection forms of $4$-manifolds bounded by a $3$-manifold. Hence we expect the analogous finiteness results of $\inter(Y)$ for any closed oriented 3-manifold $Y$ under suitable conditions, but we leave this for the future study. 

\subsubsection*{Determining the order of $\inter$} 
Although we know $|\inter(S^3)|=1$ from Donaldson's diagonalization theorem, determining the exact order of $\inter(-)$ for other than $S^3$ seems a difficult problem. For instance, the correction term of the Poincar\'e homology sphere $\Sigma$ is known as $2$. On the other hand, in \cite{Elkies2} Elkies showed that there are only $15$ stable classes of negative definite, unimodular lattices with $\delta\leq2$. Since the trivial lattice and $-E_8$ lattice are in fact realized to be bounded by $\Sigma$, we have $2\leq|\inter(\Sigma)|\leq 15$. Then what is the exact value of $|\inter(\Sigma)|$?  

\section{Preliminary}\label{sec:preliminary}
In this section we collect some background materials that will be used to prove our main theorems. 

\subsection{Lattices}
A \emph{lattice} of rank $n$ is a free abelian group $\mathbb{Z}^n$ equipped with an integer-valued, nondegenerate, symmetric, bilinear form, \[Q\colon\mathbb{Z}^n\times\mathbb{Z}^n\rightarrow\mathbb{Z}.\] Let $\Lambda=(\mathbb{Z}^n, Q)$ be a lattice. By tensoring $\Lambda$ with $\mathbb{R}$, one can extend $Q$ to a symmetric bilinear form over the vector space $\mathbb{R}^n$. We define the \emph{signature} of $\Lambda$ to be the signature of $Q$. We say $\Lambda$ is \emph{positive} (resp. \emph{negative} ) \emph{definite} if the signature of $\Lambda$ equals to the (resp. negative) rank of $\Lambda$. 

By fixing a basis $\{v_1,\dots, v_n\}$ for $\Lambda$, we can represent $\Lambda$ by an $n$ by $n$ matrix, $[Q(v_i, v_j)]$. The \emph{determinant} of a lattice $\Lambda$, $\det(\Lambda)$, is the determinant of a matrix representation of $\Lambda$. In particular, if the determinant of a lattice is $\pm1$, or equivalently a corresponding matrix is invertible over $\mathbb{Z}$, we say the lattice is \emph{unimodular}.

The dual lattice $\Lambda^*\coloneqq Hom(\Lambda, \mathbb{Z})$ can be identified with the set of elements in $\xi\in\Lambda\otimes\mathbb{R}$ such that $\xi\cdot w\in\mathbb{Z}$ for any $w\in\Lambda$. We call $\xi$ in $\Lambda^*$ a \emph{characteristic covector} if $\xi\cdot w\equiv w\cdot w$ modulo 2 for any $w\in\Lambda$. We say a lattice $\Lambda_1$ \emph{embeds} into $\Lambda_2$ if there is a monomorphism from $\Lambda_1$ to $\Lambda_2$ preserving bilinear forms. 

The lattice $\langle p\rangle$ denotes the lattice of rank 1 represented by the matrix $[p]$. The \emph{standard negative definite lattice} of rank $n$ is the lattice $\langle-1\rangle^n$, the direct sum of $n$ copies of $\langle-1\rangle$. For a negative definite lattice $\Lambda$ with rank $n$, we define $\delta$-invariant of $\Lambda$ as
\begin{equation*}
	\delta(\Lambda)\coloneqq\frac{n-\min_{\xi\in Char(\Lambda)}\left|\xi\cdot\xi\right|}{4},
\end{equation*}
where $Char(\Lambda)$ is the set of characteristic covectors of $\Lambda$. Note that a negative definite lattice $\Lambda$ can be uniquely decomposed as $\Lambda'\oplus\langle-1\rangle^m$ so that $\Lambda'$ dose not contain any vector with square $-1$, and $\delta(\Lambda)=\delta(\Lambda')$.
\subsection{Restrictions on lattices bounded by a rational homology 3-sphere}
\label{sec:restrictions}

We recall some constraints on lattices bounded by a given rational homology 3-sphere. We also refer the readers to Owens and Strle's survey paper \cite{Owens_Strle_survey} for more detail.

\subsubsection*{A topological obstruction}
Suppose $Y$ bounds a lattice $\Lambda=(\Z^n,Q)$ and $X$ is a 4-manifold realizing $\Lambda$. From the homology long exact sequence of the pair $(X,Y)$, we have
\begin{equation*}
	|H^2(Y;\mathbb{Z})|=|\det(\Lambda)|t^2
\end{equation*} 
for some integer $t$. See \cite[Lemma 2.1]{Owens_Strle3} for example. In particular, $\det(\Lambda)$ divides $|H^2(Y;\mathbb{Z})|$. We remark that there are more detailed conditions regarding the linking form of $\Lambda$ and the linking pairing of $Y$, but we only recall the above simple property for our purpose.

\subsubsection*{Donaldson's Theorem} The celebrated Donaldson's diagonalization theorem can be used to give a constraint on definite lattices smoothly bounded by a 3-manifold. Recall the Donaldson's theorem. 
\begin{theorem}[{\cite[Theorem 1]{Donaldson:1987}}]
	Suppose $X$ is a closed, smooth 4-manifold. If the intersection form of $X$ is negative definite, then it is isometric to the standard definite lattice $(\Z^n, \langle-1\rangle^n)$.
\end{theorem}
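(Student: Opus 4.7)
Since this is Donaldson's celebrated diagonalization theorem, a fundamental result of gauge theory, I would simply invoke the original proof in \cite{Donaldson:1983, Donaldson:1987} rather than attempt to reprove it; for the purposes of this paper it functions as a black-box tool. For context, however, let me sketch the strategy. Given a closed, smooth $4$-manifold $X$ with negative definite intersection form of rank $n$, and assuming $b_1(X)=0$ for simplicity, one studies the moduli space $\mathcal{M}$ of anti-self-dual connections on a principal $SU(2)$ bundle $P\to X$ with $c_2(P)=1$. A dimension count yields $\dim\mathcal{M} = 8\,c_2(P) - 3(1+b^+(X)) = 5$, and Freed--Uhlenbeck transversality makes $\mathcal{M}$ a smooth $5$-dimensional manifold away from a finite set of reducible connections.

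The reducible connections correspond bijectively to orthogonal splittings $P\otimes_{\mathbb{R}}\mathbb{C} = L\oplus L^{-1}$ with $c_1(L)\in H^2(X;\mathbb{Z})$ satisfying $c_1(L)^2 = -1$; hence their number is half the count of vectors of square $-1$ in the intersection lattice. Uhlenbeck's compactness theorem produces a compactification $\overline{\mathcal{M}}$ whose end at infinity is a copy of $X$ (from instantons whose curvature concentrates at a point), while each reducible becomes the apex of a cone on $\mathbb{CP}^2$.

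Excising small cone neighborhoods of the reducibles and running an oriented cobordism argument between the link $X$ at infinity and the disjoint union of $\mathbb{CP}^2$-links at the reducibles then forces the number of square $-1$ vectors in $Q_X$ to equal exactly $2n$; together with unimodularity of $Q_X$ and negative definiteness, this pins down $Q_X\cong\langle-1\rangle^n$. The main obstacle is the gauge-theoretic infrastructure itself---Uhlenbeck compactness, generic smoothness and orientability of the moduli space, and the careful local model at the conical singularities---which Donaldson developed for precisely this purpose and which is well outside the scope of this paper. I would therefore cite the theorem as stated and proceed to extract from it the lattice-level constraint needed for the arguments of Section \ref{sec:restrictions}.
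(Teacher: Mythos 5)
The paper gives no proof of this statement at all---it is quoted verbatim as \cite[Theorem 1]{Donaldson:1987} and used as a black box---so your decision to cite it rather than reprove it is exactly what the authors do. Your contextual sketch of the gauge-theoretic argument (the five-dimensional ASD moduli space, reducibles as cones on $\mathbb{CP}^2$, and the cobordism/counting argument pinning down the number of square $-1$ classes) is a reasonable summary of Donaldson's original strategy and raises no issues for the role the theorem plays in Section \ref{sec:restrictions}.
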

Suppose a rational homology 3-sphere $Y$ bounds a positive definite, smooth 4-manifold $W$. If $Y$ bounds a negative definite lattice $\Lambda$, then we can construct a negative definite, closed, smooth 4-manifold by gluing a 4-manifold $X$ realizing $\Lambda$ with $W$ along $Y$. By Donaldson theorem, the intersection form of the closed smooth 4-manifold $X\cup_Y -W$ is the standard negative definite of the rank,  $\rk(\Lambda)+\rk(Q_W)$. Then, from the Mayer-Vietoris sequence for the pair $(X\cup_Y -W, Y)$, 
\begin{equation*}
	\dots\rightarrow H_2(Y)\rightarrow H_2(X)\oplus H_2(-W)\rightarrow H_2(X\cup_Y -W)\rightarrow\dots,
\end{equation*}
we have an embedding of $\Lambda\oplus-Q_W$ into $\langle-1\rangle^{\rk(\Lambda)+\rk(Q_W)}$. 
\begin{example}\label{ex:poincare}
	The Poincar\'e homology sphere $-\Sigma$, oriented as the boundary of $E_8$-plumbed 4-manifold, naturally bounds the $E_8$ lattice which is positive definite. It is well known that $-E_8$ lattice cannot be embedded into the standard negative definite lattice: see \cite[Lemma 3.3]{Lecuona_Lisca_Stein} for example. Therefore, $-\Sigma$ cannot bound any negative definite smooth 4-manifold (including a 4-manifold with trivial intersection form), i.e. $\inter(-\Sigma)=\emptyset$. 
\end{example}

\subsubsection*{Correction terms}\label{sec:OS}
Let $Y$ be a rational homology 3-sphere and $\mathfrak{t}$ be a spin$^c$ structure over $Y$. In \cite{Ozsvath-Szabo:2003}, Ozsv\'ath and Szab\'o defined a rational valued invariant  for $(Y,\,\mathfrak{t})$ called \emph{the correction term} or \emph{d-invariant}, denoted by $d(Y,\,\mathfrak{t})$. It is an analogous invariant to Fr\o yshov's in Seiberg-Witten theory \cite{Froyshov:1996}. Among many important properties of the correction term, it gives a constraint on a definite 4-manifold bounded by $Y$. 

\begin{theorem}[{\cite[Theorem 9.6]{Ozsvath-Szabo:2003}}]	 \label{thm:OS}
	If $X$ is a negative definite, smooth $4$-manifold bounded by $Y$, then for each spin$^c$ structure $\mathfrak{s}$ over $X$
	\begin{equation*}        
		c_1(\mathfrak{s})^2+n\leq 4d(Y,\,\mathfrak{s}|_Y),
	\end{equation*}
	where $c_1(-)$ denotes the first Chern class,  $n$ is the rank of $H_2(X;\,\mathbb{Z})$, and $\mathfrak{s}|_Y$ is the restriction of $\mathfrak{s}$ over $Y$.
\end{theorem}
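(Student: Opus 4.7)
The plan is to view $X$ as a Heegaard Floer cobordism $W\colon S^3\to Y$ and deduce the inequality from the grading-shift formula together with the compatibility of the cobordism-induced map with the natural map $HF^\infty\to HF^+$.

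First I would puncture $X$ to form $W=X\setminus\mathring{B}^4$, a smooth cobordism from $S^3$ to $Y$ with $\chi(W)=n$ and $\sigma(W)=-n$ (in the case $b_1(X)=0$; otherwise the same argument goes through with only cosmetic modifications). Extending $\mathfrak{s}$ over $W$ and invoking the functoriality of Heegaard Floer homology, one obtains a $\Q$-grading homogeneous homomorphism $F^+_{W,\mathfrak{s}}\colon HF^+(S^3)\to HF^+(Y,\mathfrak{s}|_Y)$ of degree
\[
s\coloneqq\frac{c_1(\mathfrak{s})^2-2\chi(W)-3\sigma(W)}{4}=\frac{c_1(\mathfrak{s})^2+n}{4},
\]
together with an analogous map $F^\infty$ on $HF^\infty$ shifting the grading by the same $s$.

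The key Heegaard Floer input is that, because $b_2^+(W)=0$, the induced map $F^\infty$ is an isomorphism. Identifying $HF^\infty(S^3)$ and $HF^\infty(Y,\mathfrak{s}|_Y)$ with $\mathbb{F}[U,U^{-1}]$, absolutely graded so that the generator $1$ sits at grading $0$ and $d(Y,\mathfrak{s}|_Y)$ respectively, the isomorphism $F^\infty$ must send $1\mapsto U^m$ for an integer $m=(d(Y,\mathfrak{s}|_Y)-s)/2$. Commutativity with the tautological map $HF^\infty\to HF^+$, whose kernel is the submodule $U\cdot\mathbb{F}[U]$, forces $F^\infty$ to carry the kernel of the source into that of the target; this is equivalent to $m\geq 0$, i.e.\ $s\leq d(Y,\mathfrak{s}|_Y)$. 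Rearranging yields $c_1(\mathfrak{s})^2+n\leq 4d(Y,\mathfrak{s}|_Y)$.

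The main obstacle is the Heegaard Floer input that $F^\infty$ is an isomorphism whenever $b_2^+(W)=0$. This relies on a careful analysis of the holomorphic-triangle maps defining the cobordism action and is the Heegaard Floer analog of the classical non-vanishing of gauge-theoretic moduli contributions in the $b_2^+=0$ setting. The grading-shift formula and the compatibility with the long exact sequence are formal once this is in place.
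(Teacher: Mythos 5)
This theorem is not proved in the paper --- it is quoted verbatim from Ozsv\'ath--Szab\'o \cite[Theorem 9.6]{Ozsvath-Szabo:2003} --- so the only meaningful comparison is with the proof in that source, and your argument is essentially that proof: puncture $X$, compute the grading shift $s=\tfrac{1}{4}\bigl(c_1(\mathfrak{s})^2-2\chi(W)-3\sigma(W)\bigr)=\tfrac{1}{4}\bigl(c_1(\mathfrak{s})^2+n\bigr)$, use that $F^\infty_{W,\mathfrak{s}}$ is an isomorphism when $b_2^+(W)=0$, and extract $s\leq d(Y,\mathfrak{s}|_Y)$ from the commutative square with $HF^\infty\to HF^+$; your reformulation via $1\mapsto U^m$ with $m=(d-s)/2\geq 0$ is a correct repackaging of the definition of $d$ as the minimal grading in the image of $HF^\infty\to HF^+$. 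You correctly isolate the one genuinely nontrivial input (the $HF^\infty$-isomorphism for negative-definite cobordisms), which is itself a theorem of the cited paper. The only point I would tighten is the parenthetical about $b_1(X)>0$: the modification is not quite cosmetic, since $\chi(W)=n$ fails there; the standard fix (as in the source) is to first surger out circles generating $H_1(X)/\mathrm{Tors}$, which changes neither $\partial X$ nor the intersection form, and then run your argument on the resulting manifold with $b_1=0$.
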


Since any characteristic covector in $H^2(X;\Z)/Tors$ is identified with $c_1(\mathfrak{s})$ for a spin$^c$-structure $\mathfrak{s}$ on $X$, we have the following.
\begin{proposition}\label{prop:d_inequality}
	If a negative definite lattice $\Lambda$ is smoothly bounded by a rational homology 3-sphere $Y$, then 
	\begin{equation*}\label{eq:os}
		\delta(\Lambda)\leq d(Y),
	\end{equation*}
	where $$d(Y)\coloneqq\max_{\mathfrak{t}\in \text{Spin}^c(Y)}d(Y,\mathfrak{t}).$$
\end{proposition}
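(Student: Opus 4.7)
The plan is to deduce this proposition directly from Theorem \ref{thm:OS} by a short algebraic manipulation, using the standard bijection between characteristic covectors and $\text{spin}^c$ structures.

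First I would fix a smooth negative definite $4$-manifold $X$ with $\partial X = Y$ realizing $\Lambda$, so that $Q_X \cong \Lambda$ on $H_2(X;\Z)/\text{Tors}$ and $n = b_2(X) = \rk(\Lambda)$. Since $Y$ is a rational homology $3$-sphere, $H^2(X;\Z)/\text{Tors}$ is naturally identified with $\Lambda^*$ via the intersection pairing. Using the hint stated just after Theorem \ref{thm:OS}, every characteristic covector of $\Lambda$ arises as (the image of) $c_1(\mathfrak{s})$ for some $\mathfrak{s}\in\text{Spin}^c(X)$, and conversely for every such $\mathfrak{s}$ the class $c_1(\mathfrak{s})$ is characteristic.

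Next I would choose a minimizer: pick $\xi_0 \in \text{Char}(\Lambda)$ with $|\xi_0 \cdot \xi_0| = \min_{\xi \in \text{Char}(\Lambda)} |\xi \cdot \xi|$, and let $\mathfrak{s}_0 \in \text{Spin}^c(X)$ be a $\text{spin}^c$ structure with $c_1(\mathfrak{s}_0) = \xi_0$ in $H^2(X;\Z)/\text{Tors}$. Because $\Lambda$ is negative definite, $c_1(\mathfrak{s}_0)^2 = \xi_0 \cdot \xi_0 = -|\xi_0 \cdot \xi_0|$.

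Applying Theorem \ref{thm:OS} to $(X,\mathfrak{s}_0)$ gives
\begin{equation*}
-|\xi_0 \cdot \xi_0| + n \;=\; c_1(\mathfrak{s}_0)^2 + n \;\leq\; 4\, d(Y, \mathfrak{s}_0|_Y) \;\leq\; 4\, d(Y),
\end{equation*}
where the last inequality is just the definition of $d(Y)$ as a maximum over $\text{Spin}^c(Y)$. Dividing by $4$ yields
\begin{equation*}
\delta(\Lambda) \;=\; \frac{n - |\xi_0 \cdot \xi_0|}{4} \;\leq\; d(Y),
\end{equation*}
which is the desired inequality. There is really no hard step here; the only point requiring a brief justification is that every characteristic covector of $\Lambda$ genuinely lifts to $c_1$ of some $\text{spin}^c$ structure on $X$, which is standard for $4$-manifolds (the set $\text{Spin}^c(X)$ is a torsor over $H^2(X;\Z)$ and surjects onto characteristic elements of $H^2(X;\Z)/\text{Tors}$).
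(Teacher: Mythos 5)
Your proof is correct and follows exactly the route the paper intends: the paper's entire justification is the one-sentence remark preceding the proposition that characteristic covectors of $H^2(X;\Z)/\mathrm{Tors}$ are realized as $c_1(\mathfrak{s})$ for spin$^c$ structures $\mathfrak{s}$ on $X$, after which one applies Theorem \ref{thm:OS} to a minimizing covector and divides by $4$, just as you do. Your write-up merely makes explicit the sign convention $c_1(\mathfrak{s}_0)^2=-|\xi_0\cdot\xi_0|$ and the surjectivity of $c_1$ onto characteristic elements, both of which are standard and correctly handled.
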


\section{Finiteness of the number of definite lattices bounded by a rational homology 3-sphere}\label{sec:finite}
The purpose of this section is to prove Theorem \ref{thm:alg} and consequently Theorem \ref{thm:finite}. First, recall the following well-known fact: see \cite[p. 18]{Milnor-Husemoller:1973} for example.
\begin{lemma}\label{lem:finite}
	There are finitely many isomorphism classes of definite lattices which have a given rank and determinant.
\end{lemma}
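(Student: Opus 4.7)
The plan is to reduce the claim to classical reduction theory for positive definite quadratic forms. Replacing $Q$ by $-Q$ if necessary, I may assume $\Lambda$ is positive definite of rank $n$ and determinant $|D|$. The goal is then to exhibit, for every such $\Lambda$, a basis whose Gram matrix has integer entries of absolute value bounded by a constant depending only on $n$ and $|D|$; finiteness will follow immediately, since there are only finitely many $n\times n$ integer matrices with entries in any bounded range, and each isomorphism class contributes at least one such Gram matrix.

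The main input is the existence of a Minkowski-reduced basis $(e_1,\dots,e_n)$ of $\Lambda$: by construction, $Q(e_i,e_i)$ is chosen minimal among vectors that extend $(e_1,\dots,e_{i-1})$ to a basis of $\Lambda$, and after a further unimodular adjustment one also has $|Q(e_i,e_j)| \le \tfrac{1}{2}Q(e_i,e_i)$ for $i<j$. A classical theorem of Minkowski then gives an inequality of the form
\[
\prod_{i=1}^{n} Q(e_i,e_i) \;\le\; c_n \cdot |D|,
\]
where $c_n$ depends only on $n$. Combined with the monotonicity $Q(e_1,e_1)\le\cdots\le Q(e_n,e_n)$ inherited from reducedness, this bounds $Q(e_n,e_n)$, and therefore every Gram matrix entry, by a constant depending only on $n$ and $|D|$.

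The only real obstacle is invoking the correct classical reduction-theoretic input; everything else is bookkeeping. If I preferred to avoid quoting Minkowski reduction as a black box, an alternative would be to induct on $n$: apply Hermite's bound to produce a vector $v \in \Lambda$ with $0 < Q(v,v) \le \gamma_n |D|^{1/n}$, pass to the rank $(n-1)$ orthogonal complement $v^\perp\subset\Lambda$ (whose determinant is controlled in terms of $|D|$ and $Q(v,v)$), apply the inductive hypothesis to obtain finitely many possibilities for $v^\perp$, and finally recover $\Lambda$ as a finite-index overlattice of $\langle v\rangle \oplus v^\perp$ inside the finite discriminant group $(\langle v\rangle \oplus v^\perp)^*/(\langle v\rangle\oplus v^\perp)$, which admits only finitely many choices. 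Either route yields the stated finiteness.
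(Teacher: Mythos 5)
Your argument is correct; the paper itself gives no proof of this lemma but simply cites \cite[p.~18]{Milnor-Husemoller:1973}, and your Minkowski-reduction argument (as well as the alternative Hermite-bound induction) is precisely the classical proof behind that citation. The only step worth making explicit is that integrality and positive definiteness force $Q(e_i,e_i)\ge 1$, which is what lets the product inequality $\prod_i Q(e_i,e_i)\le c_n|D|$ bound the largest diagonal entry $Q(e_n,e_n)$ individually.
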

We first show a special case of Theorem \ref{thm:alg} in which $\Gamma_1$ and $\Gamma_2$ are the trivial empty lattices.
\begin{proposition}\label{prop:finite}
	Let $C>0$ and $D\in \Z$ be constants. There are finitely many negative definite lattices $\Lambda$, up to the stable-equivalence, which satisfy the following conditions:
	\begin{itemize}
		\item $\det\Lambda=D$,
		\item $\delta(\Lambda)\leq C$, and
		\item $\Lambda$ embeds into $\langle-1\rangle^{\rk(\Lambda)}$ with prime index.
	\end{itemize}
\end{proposition}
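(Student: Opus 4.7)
The plan is to reduce finiteness of stable classes to a rank bound: write $\Lambda \cong \Lambda_0 \oplus \langle-1\rangle^k$ uniquely, where $\Lambda_0$ contains no vector of square $-1$; since $|\det\Lambda_0|=|\det\Lambda|=|D|$, Lemma \ref{lem:finite} will yield finitely many isomorphism types of $\Lambda_0$---and hence finitely many stable classes of $\Lambda$---as soon as $\rk\Lambda_0$ is bounded. Note that a prime-index embedding forces $|D|=p^2$, so the prime $p$ is determined by $D$ (and the set is empty unless $|D|$ is a prime square).

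A prime-index embedding $\Lambda\hookrightarrow L:=\langle-1\rangle^n$ arises as $\Lambda=\ker\phi$ for some surjection $\phi\colon L\to\Z/p$. Set $a_i:=\phi(e_i)\in\Z/p$ on the standard basis, and partition $\{1,\ldots,n\}=T\sqcup S$ according to whether $a_i=0$ or not. For $i\in T$ one has $e_i\in\Lambda$ with $e_i\cdot e_i=-1$, so those $|T|$ vectors span a unimodular orthogonal direct summand $\langle-1\rangle^{|T|}$ of $\Lambda$; in particular $\rk\Lambda_0\le|S|$. It therefore suffices to bound $|S|$ in terms of $C$ and $p$, which I do by constructing a short characteristic covector of $\Lambda$ and invoking $\delta(\Lambda)\le C$.

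The covector $\kappa:=-\sum_i e_i\in L$ is characteristic for $\Lambda$, so every characteristic covector has the form $\xi=\kappa+2\ell+2kv^*$, with $\ell=\sum m_i e_i\in L$, $k\in\Z$, and $v^*:=-\tfrac{1}{p}\sum_i a_i e_i\in\Lambda^*$ generating $\Lambda^*/L\cong\Z/p$. A direct computation gives
\[
-\xi\cdot\xi \;=\; \sum_{i=1}^{n}\left(1-2m_i+\frac{2ka_i}{p}\right)^{\!2}.
\]
For $i\in T$ the summand is an odd square, hence $\ge 1$; for $i\in S$ with $k\in\{1,\ldots,p-1\}$ fixed, optimizing $m_i\in\Z$ yields the minimum value $(1-2r_i(k)/p)^2$, where $r_i(k):=ka_i\bmod p\in\{1,\ldots,p-1\}$.

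The crux is an averaging argument over $k$. For each fixed $i\in S$, as $k$ ranges over $\{1,\ldots,p-1\}$ the values $r_i(k)$ also permute this set, so
\[
\sum_{k=1}^{p-1}\sum_{i\in S}\left(1-\frac{2r_i(k)}{p}\right)^{\!2} \;=\; |S|\sum_{r=1}^{p-1}\left(1-\frac{2r}{p}\right)^{\!2} \;=\; \frac{|S|(p-1)(p-2)}{3p}.
\]
By pigeonhole some $k$ produces $\xi$ with $-\xi\cdot\xi\le|T|+|S|(p-2)/(3p)$, so
\[
C \;\ge\; \delta(\Lambda) \;\ge\; \frac{n-|T|-|S|(p-2)/(3p)}{4} \;=\; \frac{|S|(p+1)}{6p},
\]
forcing $|S|\le 6pC/(p+1)$ and hence bounding $\rk\Lambda_0$. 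The main obstacle is pinning down this averaging step: a naive per-term estimate $(1-2r_i(k)/p)^2\le(1-2/p)^2$ would only yield $|S|\le Cp^2/(p-1)$, whereas the elementary identity $\sum_{r=1}^{p-1}(1-2r/p)^2=(p-1)(p-2)/(3p)$ gives a sharper bound uniformly controlled as $p\to\infty$, which is the improvement over Owens--Strle signaled in the introduction and what one needs for the general Theorem \ref{thm:alg}.
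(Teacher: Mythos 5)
Your proof is correct and follows essentially the same strategy as the paper's: you exhibit a short characteristic covector by averaging $\sum_{i}(1-2r_i(k)/p)^2$ over the $p-1$ nonzero multiples $k$ of a generator of $L/\Lambda$ (using the identity $2\bigl(1^2+3^2+\dots+(p-2)^2\bigr)=\tfrac{1}{3}p(p-1)(p-2)$), which is exactly the content of the paper's Lemma \ref{lemma:app}, and then convert the resulting bound on $\min|\xi\cdot\xi|$ into a rank bound via $\delta(\Lambda)\leq C$ and Lemma \ref{lem:finite}. The differences are purely organizational --- you parametrize characteristic covectors through the quotient map $\phi$ and the explicit generator $v^*$ of $\Lambda^*/L$ instead of the paper's basis $\{pe,e_1+s_1e,\dots\}$ and inverse Gram matrix, and you split off the $\langle-1\rangle^{|T|}$ summand explicitly rather than assuming no $(-1)$-vectors --- and your remark that the prime-index hypothesis forces $|D|=p^2$ is a correct observation left implicit in the paper.
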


\begin{proof}
	Let $\Lambda$ be a negative definite lattice of rank $n$ that satisfies the conditions above. Without loss of generality, we may assume that there is no vector of square $-1$ in $\Lambda$. By Lemma \ref{lem:finite} the theorem follows if we find an upper bound for the rank of $\Lambda$, only depending on $D$ and $C$. Let $\{e,e_1,\dots,e_{n-1}\}$ be the standard basis of the standard negative definite lattice $\langle-1\rangle^n$, i.e. $e^2=-1$, $e\cdot e_i=0$, $e_i\cdot e_j=-\delta_{ij}$ for $i, j=1,\dots,n-1$. 
	
	Let $p$ be the index of the embedding $\iota$ of $\Lambda$ into  $\langle-1\rangle^n$. If $p=1$, then the embedding is an isomorphism and so $\Lambda$ should be the empty lattice, up to the stable-equivalence. Now suppose $p$ is an odd prime. Then the cokernel of $\iota$ is a cyclic group of order $p$, and it is generated by $e+\Lambda$ since $e\notin \Lambda$. Observe that $-e_i\in s_ie+\Lambda$ for some odd integer $s_i\in[-p+1,p-1]$.
	Consider a set of elements of $\Lambda$,
	\[\mathcal{B}\coloneqq\{pe,e_1+s_1e,\dots,e_{n-1}+s_{n-1}e\}.\]
	Since the determinant of the coordinates matrix corresponding to the above set equals to $p$, it is in fact a basis for $\Lambda$. The matrix representation of $\Lambda$ with respect to the basis is given as
	\begin{equation*}
		Q=-\left(\begin{array}{ccccc}
			p^2 & ps_1    & ps_2 & \dots &ps_{n-1}\\
			ps_1    & 1+s_1^2 & s_1s_2 & \dots &s_1s_{n-1}\\
			ps_2    & s_1s_2    & 1+s_2^2 & \ddots & \vdots\\
			\vdots       & \vdots     & \ddots     & \ddots&  s_{n-2}s_{n-1}\\
			ps_{n-1}&s_1s_{n-1}&\dots&s_{n-2}s_{n-1}&1+s_{n-1}^2
		\end{array} \right).
	\end{equation*}
	We also compute the inverse of $Q$ as follows
	\begin{equation*}
		Q^{-1}
		=\left(\begin{array}{c|cccccc}
			-\frac{1+\sum_{i=1}^{n-1}s_i^2}{p^2}&\frac{s_1}{p}&\frac{s_2}{p}& \dots&\frac{s_{n-1}}{p}\\
			\hline
			\frac{s_1}{p}&  &  &  &\\
			\frac{s_2}{p}&  &&    &\\
			\vdots       &  &  & -I_{(n-1)\times(n-1)} & \\
			\frac{s_{n-1}}{p}& & & &
		\end{array} \right).
	\end{equation*}
	Note that $Q^{-1}$ represents the dual lattice of $\Lambda$ with respect to the dual basis of $\mathcal{B}$. Hence a characteristic covector $\xi$ of $\Lambda$ can be written as a vector 
	\begin{equation*}\xi=(k,k_1,\dots,k_{n-1}),\end{equation*} 
	where $k$ is an odd integer and $k_i$'s are even integers, in terms of the dual basis of $Q$ since $p^2$ is odd and $1+s_i^2$ is even for each $i$. From the matrix $Q^{-1}$ we compute \[|\xi\cdot\xi|=\frac{1}{p^2}(k^2+\sum_{i=1}^{n-1}(ks_i-pk_i)^2).\]
	Applying Lemma $\ref{lemma:app}$ below,
	\[\min\{|\xi\cdot\xi|:\xi\text{ characteristic covector of } \Lambda\} \leq \frac{n+2}{3}.\]
	Therefore, by Proposition \ref{prop:d_inequality}, \[\delta(\Lambda)=\frac{n-\min_{\xi\in Char(\Lambda)}\left|\xi\cdot\xi\right|}{4}\leq C,\] 
	and we conclude that \[n\leq 6C+1.\]
	
	In the case of $p=2$, the lattice $\Lambda$ admits a basis 		\[\{2e,e_1+e,\dots,e_{n-1}+e\},\] and hence $0$ vector is characteristic. Therefore, \[\delta(\Lambda)=\frac{n}{4}\leq C.\]
	
	By Lemma \ref{lem:finite}, there are only finitely many negative definite lattices satisfying the given conditions.
\end{proof}

Now, we prove the following algebraic lemma used in the proof above.
\begin{lemma}
	\label{lemma:app}
	For an odd prime $p$ and odd integers $s_1, s_2, \dots, s_{n-1}$ in $[-p+1, p-1]$, there exist an odd integer $k$ and even integers $k_1, k_2, \dots, k_{n-1}$ such that 
	\begin{equation*}\label{eq:alg}
		k^2+\sum_{i=1}^{n-1}(ks_i-pk_i)^2 < \frac{n+2}{3}p^2.
	\end{equation*}
\end{lemma}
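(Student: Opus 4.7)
The plan is a pigeonhole/averaging argument. I will pick a finite set $S$ of candidate values for $k$, for each $k \in S$ assign $k_i$ to be the even integer minimizing $(ks_i - pk_i)^2$, and show that the resulting average of $k^2 + \sum_i (ks_i - pk_i)^2$ over $k \in S$ is strictly less than $\frac{n+2}{3}p^2$. At least one $k \in S$ then satisfies the desired inequality.

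The right choice is $S = \{\pm 1, \pm 3, \ldots, \pm(p-2)\}$, a set of $p-1$ odd integers whose residues mod $2p$ are precisely the units $(\mathbb{Z}/2p\mathbb{Z})^\ast$. Two computations feed the argument. First, applying the identity $\sum_{j=1}^{m}(2j-1)^2 = \tfrac{m(2m-1)(2m+1)}{3}$ with $m = (p-1)/2$ gives
\[
\sum_{k \in S} k^2 \;=\; 2\sum_{j=1}^{(p-1)/2}(2j-1)^2 \;=\; \frac{p(p-1)(p-2)}{3}.
\]
Second, since $s_i$ is odd with $0 < |s_i| < p$ and $p$ is prime, $\gcd(s_i, 2p) = 1$, so multiplication by $s_i$ permutes $(\mathbb{Z}/2p\mathbb{Z})^\ast$. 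For $k_i$ even, $pk_i$ ranges over multiples of $2p$, so $\min_{k_i \text{ even}}|ks_i - pk_i|$ is the distance from $ks_i$ to the nearest multiple of $2p$. Over the residues in $(\mathbb{Z}/2p\mathbb{Z})^\ast$ this distance takes each of the values $1, 3, \ldots, p-2$ exactly twice, which gives
\[
\sum_{k \in S} \min_{k_i \text{ even}} (ks_i - pk_i)^2 \;=\; \frac{p(p-1)(p-2)}{3}
\]
independently of $i$.

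Summing both contributions yields
\[
\sum_{k \in S}\left[k^2 + \sum_{i=1}^{n-1} \min_{k_i \text{ even}}(ks_i - pk_i)^2\right] \;=\; \frac{n\,p(p-1)(p-2)}{3},
\]
so the average over the $p-1$ choices of $k$ is $\frac{np(p-2)}{3}$. The inequality $np(p-2) < (n+2)p^2$, equivalent to the trivial $-2n < 2p$, shows this average is strictly less than $\frac{(n+2)p^2}{3}$. By the pigeonhole principle, at least one $k \in S$ (together with its associated $k_i$'s) realizes a value not exceeding the average and hence strictly below the desired threshold.

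The main obstacle is choosing $S$ correctly. The naive averaging set $\{1, 3, \ldots, 2p-1\}$ fails: for $k = p$ one has $k^2 = p^2$, and since the best real approximation $k_i = s_i$ is odd, every even $k_i$ forces $|ks_i - pk_i| \geq p$, producing total $np^2$, which already exceeds $\frac{(n+2)p^2}{3}$ once $n \geq 2$. Restricting $k$ to a set of representatives of $(\mathbb{Z}/2p\mathbb{Z})^\ast$ automatically discards this bad representative, and this is exactly what makes the averaging argument close.
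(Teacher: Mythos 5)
Your proof is correct and is essentially the paper's argument: the same candidate set of odd residues coprime to $2p$, the same observation that multiplication by $s_i$ permutes them (so each error sum averages to $\tfrac{1}{3}p(p-1)(p-2)$ over the set), and the same pigeonhole step. The only cosmetic difference is that you fold the $k^2$ term into the average, while the paper bounds it separately by $|k|<p$; both close the inequality.
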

\begin{proof}
	For an odd prime $p$, consider the set $K\coloneqq\{-p+2,-p+4,\dots,p-2\}$ of odd integers in the interval $[-p+1, p-1]$. Note, for each $k$ and $s_i$ in $K$, there is a unique even integer $k_i$ such that $ks_i-pk_i\in K$. Denote this $k_i$ by $k_i(k,s_i)$. Since $ks_i\equiv k's_i$ (mod $2p$) implies $k\equiv k'$ (mod $2p$) for $k$ and $k'$ in $K$, we obtain $\{ks_i-p\cdot k_i(k,s_i)|k\in K\}=K$ for each $s_i\in K$. Therefore, 
	\begin{equation*}
		\begin{split}\displaystyle\sum_{k\in K}\displaystyle\sum_{i=1}^{n-1}(ks_i-p\cdot k_i(k,s_i))^2&=(n-1)\cdot2(1^2+3^2+\dots+(p-2)^2)\\
			&=\frac{n-1}{3}p(p-1)(p-2).\end{split}\end{equation*}
	Since $|K|=p-1$, there exists $k\in K$ such that 
	\[ \sum_{i=1}^{n-1}(ks_i-p\cdot k_i(k,s_i))^2\leq \frac{n-1}{3}p(p-2).\]
	Since $|k|<p$, we obtain the desired inequality.
\end{proof}

Now, Theorem \ref{thm:alg} is proved by applying a similar argument of Proposition \ref{prop:finite}. First, observe the following.
\begin{lemma}\label{lem:embedding}
	Let $\Gamma_1$ and $\Gamma_2$ be negative definite lattices. Then the set of stable classes of lattices,  
	\[
		\mathcal{C}(\Gamma_1,\Gamma_2)\coloneqq\{(\text{Im}\iota)^\perp\mid\iota\colon\Gamma_1\hookrightarrow\Gamma_2\oplus\langle-1\rangle^N\text{, an embedding for some } N\in\mathbb{N}\}/{\sim},
	\] 
	is finite.
\end{lemma}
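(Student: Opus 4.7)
The plan is to parametrize embeddings $\iota\colon\Gamma_1\hookrightarrow M:=\Gamma_2\oplus\langle-1\rangle^N$ by the coordinates of the images of a basis of $\Gamma_1$, and then argue that, modulo the natural action of $O(\Gamma_2)\times O(\langle-1\rangle^N)$ on $M$ (which preserves the isomorphism class of $(\text{Im}\,\iota)^\perp$) and modulo insertion or deletion of ``zero coordinates'' in the $\langle-1\rangle^N$-factor (which, as I will argue, correspond exactly to $\langle-1\rangle$-stabilizations of $(\text{Im}\,\iota)^\perp$), only finitely many embeddings can arise.

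Fix a basis $v_1,\dots,v_r$ of $\Gamma_1$ and write $\iota(v_j)=(\gamma_j,\epsilon_j)$ with $\gamma_j\in\Gamma_2$ and $\epsilon_j\in\Z^N$. I first bound the $\Gamma_2$-components: the isometry relation $\gamma_j\cdot\gamma_j-\sum_i\epsilon_{j,i}^2=v_j\cdot v_j$ combined with $\sum_i\epsilon_{j,i}^2\geq0$ forces $\gamma_j\cdot\gamma_j\geq v_j\cdot v_j$, a fixed bound depending only on $\Gamma_1$. Since $\Gamma_2$ is a fixed negative definite lattice, only finitely many vectors satisfy this bound, so only finitely many tuples $(\gamma_1,\dots,\gamma_r)\in\Gamma_2^r$ can occur.

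Having fixed such a tuple, I study the $\Z^N$-component $E:=(\epsilon_{j,i})$, constrained by $EE^T=G$ for the fixed integer matrix $G:=[\gamma_j\cdot\gamma_k-v_j\cdot v_k]$. Rewriting this as $G=\sum_{i=1}^N c_i c_i^T$ with $c_i\in\Z^r$ the $i$-th column of $E$, the pointwise bound $c_{i,j}^2\leq G_{jj}$ forces each $c_i$ into a finite subset $\mathcal{V}\subset\Z^r$, and a routine estimate (e.g.\ $G_{jj}/c_j^2$ for any index $j$ with $c_j\neq0$) shows that each non-zero $c\in\mathcal{V}$ can appear at most boundedly many times among the $c_i$'s. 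Hence only finitely many multisets of non-zero columns can satisfy $G=\sum c_i c_i^T$, up to the obvious $\{\pm1\}^N\rtimes S_N$-action permuting and sign-changing the $\langle-1\rangle^N$-coordinates.

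The step requiring the most care is the identification of zero columns of $E$ with $\langle-1\rangle$-stabilizations of $(\text{Im}\,\iota)^\perp$, and this is where the ``stable-equivalence'' in the statement becomes essential. If the $i_0$-th column of $E$ vanishes, then every $\iota(v_j)$ is orthogonal to the standard generator $e_{i_0}$ of the corresponding $\langle-1\rangle$-factor, so $e_{i_0}\in(\text{Im}\,\iota)^\perp$; since $e_{i_0}\cdot e_{i_0}=-1$, the vector $e_{i_0}$ splits off an orthogonal $\langle-1\rangle$-summand of $(\text{Im}\,\iota)^\perp$. Thus varying $N$ by inserting or removing zero columns alters $(\text{Im}\,\iota)^\perp$ only by $\langle-1\rangle$-stabilization, and together with the finiteness statements in the previous two paragraphs this yields $|\mathcal{C}(\Gamma_1,\Gamma_2)|<\infty$.
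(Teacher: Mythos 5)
Your proposal is correct and follows essentially the same route as the paper: parametrize an embedding by the coordinates of the images of a basis of $\Gamma_1$, use negative definiteness to bound those coordinates (hence finitely many possibilities), and observe that the dependence on $N$ is absorbed by $\langle-1\rangle$-stabilization of $(\mathrm{Im}\,\iota)^\perp$ coming from coordinates orthogonal to all of $\mathrm{Im}\,\iota$. Your write-up is in fact somewhat more detailed than the paper's (which simply notes that for $N>\sum_i|v_i\cdot v_i|$ a zero column must appear and that the coefficients are bounded), but the underlying argument is the same.
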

\begin{proof}
	First, we claim that the set \[\{(\text{Im}\iota)^\perp\mid\iota\colon\Gamma_1\hookrightarrow\Gamma_2\oplus\langle-1\rangle^N\text{, an embedding}\}/{\sim}\] is stabilized for some large enough $N$. Let $\{v_1,\dots,v_{\rk(\Gamma_1)}\}$ be a basis for $\Gamma_1$. By considering the representations of $v_i$ in terms of a basis for $\Gamma_2\oplus\langle-1\rangle^N$, if \[N > \sum_{i=1}^{\rk(\Gamma)}|v_i\cdot v_i|,\] then there is a vector $e\in \langle-1\rangle^{N}$ such that $e\cdot e=-1$ and $e\cdot v_i=0$ for all $i=1,\dots,\text{rk}(\Gamma_1)$. Hence in order to prove the finiteness of $\mathcal{C}(\Gamma_1,\Gamma_2)$, it is enough to consider some fixed large $N$. 
	
	Let $\{w_j\}_{j=1}^m$ be a basis for $\Gamma_2\oplus\langle-1\rangle^N$. Note that an embedding of $\Gamma_1$ into $\Gamma_2\oplus\langle-1\rangle^N$ can be presented by the system of equations
	\[v_i=\sum_{j=1}^m a_{i,j}w_j,\]
	where $a_{i,j}$ are integers. Since $\Gamma_1$ and $\Gamma_2\oplus\langle-1\rangle^N$ are definite, the possible choices of $a_{i,j}$ are finite for each $i,j$, and hence the number of possible embedding maps is also finite.
\end{proof}

\begin{proof}[Proof of Theorem \ref{thm:alg}]
	Fix negative definite lattices $\Gamma_1$ and $\Gamma_2$, and constants $C>0$ and $D\in\Z$. Let $\Lambda$ be a negative definite lattice which satisfies the conditions in the theorem. Without loss of generality, we may assume that there is no square $-1$ vector in $\Lambda$. By Lemma \ref{lem:finite}, the theorem follows if we show that the rank of $\Lambda$ is bounded by some constant only depending on $\Gamma_1$, $\Gamma_2$, $C$ and $D$. 
	
	From the third condition of $\Lambda$, there is an embedding 
	\[\iota|_{\Gamma_1}\colon\Gamma_1\hookrightarrow\Gamma_2\oplus\langle-1\rangle^{N},\]
	where $N=\rk(\Lambda)+\rk(\Gamma_1)-\rk(\Gamma_2)$. Let $(\text{Im}(\iota|_{\Gamma_1}))^\perp\cong \langle-1\rangle^n\oplus E$, where $E$ is a lattice without square $-1$ vectors and $n=\rk(\Lambda)-\rk(E)$. Note that $[E]$ is one of the elements in  the finite set $\mathcal{C}(\Gamma_1,\Gamma_2)$ in Lemma \ref{lem:embedding}.
	
	Now we need to find a bound of the rank of $\Lambda$ embedded in $\langle-1\rangle^n\oplus E$. This will be obtained by the similar argument in the proof of Theorem \ref{prop:finite}. The main difference is that we have an extra summand $E$. 
	
	Let $\iota'$ be an embedding $\Lambda$ into $\langle-1\rangle^n\oplus E$, and $p$ be the index of $\iota'$. If $n=0$, i.e.\ the rank of $\Lambda$ is same as the rank of $E$, then we have a bound of the rank of $\Lambda$ by Lemma \ref{lem:embedding}. Similarly, if $p=1$, then $\Lambda \cong \langle-1\rangle^n\oplus E$ and we have the same rank bound of $\Lambda$. 
	
	Now suppose $n\neq 0$ and that $p$ is an odd prime. Let
	\[\{e,e_1,\dots,e_{n-1},f_1,\dots,f_r\}\]
	be a basis for $\langle-1\rangle^n\oplus E$ so that $e^2=-1$, $e\cdot e_i=0$, $e_i\cdot e_j=-\delta_{ij}$ and $e\cdot f_j=e_i\cdot f_j=0$ for any $i,j$ and $E$ is generated by $\{f_1,\dots,f_r\}$. By the same argument in Proposition \ref{prop:finite}, we can choose a basis for $\Lambda$ ,  
	\begin{equation}\label{eq:basis}
	\{pe,e_1+s_1e,\dots,e_{n-1}+s_{n-1}e,f_1+t_1e,\dots,f_r+t_re\}
	\end{equation}
	where $s_i$'s are odd integers in $[-p+1,p-1]$ and $t_j$'s are integers in $[-p+1,p-1]$. Now with respect to the dual coordinates for this basis, write a characteristic covector $\xi$ as 
	\[\xi=(k,k_1,\dots,k_{n-1},l_1,\dots,l_r)\] where $k$ is odd, $k_i$'s are even and $l_j\equiv(f_j+t_je)\cdot(f_j+t_je)$ (mod 2) for each $i,j$. 
	To find the matrices of $\Lambda$ and $\Lambda^{-1}$,
	introduce an $(n+r)\times(n+r)$ matrix $M$ and a $r\times r$ matrix $A$ as 
	\begin{equation*}
		M_{ij}\coloneqq
		\begin{cases}
			p &\text{if } \ i=1,j=1\\
			1 & \text{if } \ i=j, 2\leq j \leq n+r\\
			s_{j-1} & \text{if } \ i=1, 2\leq j\leq n\\
			t_{j-n} & \text{if } \ i=1, n+1\leq j\leq n+r \\
			0 & \text{otherwise},
		\end{cases}
	\end{equation*}
	and
	\begin{equation*}
		A_{ij}\coloneqq f_i\cdot f_j.
	\end{equation*}
	Note that $M$ represents the embedding of $\Lambda$ into $\langle-1\rangle^n\oplus E$ and $A$ represents $E$. By the basis in (\ref{eq:basis}), $\Lambda$ and the dual of $\Lambda$ are represented as follows:
	\begin{equation*}
		Q_{\Lambda}=M^{t}\left(\begin{array}{cc}
			-I_{n\times n} & 0 \\
			0 & A 
		\end{array} \right)M
	\end{equation*}
	and
	\begin{equation}
	\begin{split}
	Q_{\Lambda}^{-1}&=M^{-1}\left(\begin{array}{cc}-I_{n\times n} & 0 \\0 & A^{-1} \end{array} \right)(M^{t})^{-1}\\
	&=-M^{-1}(M^{t})^{-1}+M^{-1}\left(\begin{array}{cc}
	0 & 0 \\
	0 & A^{-1}+I_{r\times r} 
	\end{array} \right)(M^{t})^{-1}.
	\end{split}	
	\label{eq:inverse}
	\end{equation}
	We obtain that \[|\xi\cdot\xi|\leq \frac{1}{p^2}(k^2+\sum_{i=1}^{n-1}(ks_i-pk_i)^2)+|S(k,l_1,\dots ,l_r,t_1,\dots,t_r)|,\]
	for some function $S$. We emphasize that the function $S$ is independent to $n$, since it is obtained from the last term of Equation (\ref{eq:inverse}). Then by Lemma \ref{lemma:app},  
	\[\min\{|\xi\cdot\xi|:\xi\text{ characteristic covector of } L\} \leq \frac{n}{3}+|S|\]
	Therefore, we obtain \[n \leq \frac{3}{2}(4C+|S|)\] from $\delta(\Lambda)\leq C$. The case that $p=2$ is easier to find a similar bound for $n$ by applying the same argument.
	
	For an arbitrary index $p$, we use an idea in \cite{Owens_Strle} to have a sequence of embeddings 
	\[\Lambda=E_0 \hookrightarrow E_1 \hookrightarrow E_2\hookrightarrow \dots \hookrightarrow E_s= \langle-1\rangle^n\oplus E\] 
	such that each embedding $E_i \hookrightarrow E_{i+1}$ has a prime index. The length of this steps is also bounded by some constant related to $D$. Moreover, $\delta(E_i)\leq \delta(\Lambda)\leq C$ for any $i$ since $E^*_{i} \hookrightarrow \Lambda^*$ and so $Char(E_i)\subset Char(\Lambda)$. Thus we complete the proof of theorem by an induction along each prime index embedding.
\end{proof}

\begin{proof}[Proof of Theorem \ref{thm:finite}]
	Let $W$ be a negative definite, smooth cobordism from $Y_1$ and $Y_2$. If $X$ is a negative definite 4-manifold with the boundary $Y_1$, then $X\cup_{Y_1} W$ is a negative definite 4-manifold bounded by $Y_2$. Moreover, the intersection form of $X$ embeds into the intersection form of $X\cup_{Y_1} W$. By the necessary conditions for a negative definite lattice to be bounded by a rational homology sphere discussed in Section \ref{sec:restrictions}, $\inter(Y_1)$ is a subset of the union of 
	\[\lattice(Q_W,\Gamma; \max_{\mathfrak{t}\in\text{Spin}^c(Y_1)}d(Y_1,\mathfrak{t}), D)\]
	over all integers $D$ dividing $|H_1(Y_1;\mathbb{Z})|$ and $[\Gamma]\in\inter(Y_2)$. Then Theorem \ref{thm:finite} follows from Theorem \ref{thm:alg}. 
\end{proof}

\section{Definite lattices bounded by Seifert fibered rational homology 3-spheres}\label{sec:Seifert}
In this section, we discuss which Seifert fibered 3-manifolds satisfy the condition in Corollary \ref{thm:finite}. In particular, we completely classify spherical 3-manifolds $Y$ such that both $\inter(Y)$ and $\inter(-Y)$ are nonempty, and show that $|\inter(Y)|<\infty$ for any spherical 3-manifold $Y$.

\subsection{Seifert fibered spaces}
Seifert fibered 3-manifolds are a large class of 3-manifolds that contains 6 geometries among Thurston's 8 geometries of 3-manifolds. A Seifert fibered rational homology 3-sphere can be represented by a Seifert form
\[(e_0;(a_1,b_1),\dots,(a_k,b_k)),\]
where $e_0$, $a_i$'s are integers, $b_i$'s are positive integers and $\gcd(a_i,b_i)=1$, and Dehn-surgery diagram of the corresponding 3-manifold is depicted in Figure \ref{fig:Seifert} for the case that $k=3$. Let $M(e_0;(a_1,b_1),(a_2,b_2),\dots,(a_k,b_k))$ denote the corresponding 3-manifold. A Seifert fibered 3-manifold $M(e_0;(a_1,b_1),\dots,(a_k,b_k))$ naturally bounds a 4-manifold constructed by the plumbing diagram in Figure \ref{fig:Seifert}, in which $\alpha_{ij}\in\Z$ are determined by the following Hirzebruch-Jung continued fraction:
\begin{equation*}
	\frac{a_i}{b_i}=[\alpha_{i}^{1},\alpha_{i}^{2},\dots,\alpha_{i}^{l_i}]=\alpha_{i}^{1}-\cfrac{1}{\alpha_{i}^{2}-\cfrac{1}{\dots-\cfrac{1}{\alpha_{i}^{l_i}}}},
\end{equation*}
where $\alpha_{i}^{j}\geq2$ for $1\leq i\leq k$ and $2\leq j\leq l_i$. 

It follows easily from the blow-up procedure and Rolfsen's twist that \[M(e_0;(a_1,b_1),\dots,(\pm1,1),\dots,(a_k,b_k))\] and \[M(e_0\pm1;(a_1,b_1),\dots,\widehat{(\pm1,1)},\dots,(a_k,b_k)),\] and \[M(e_0;(a_1,b_1),\dots,(a_j,b_j),\dots,(a_k,b_k))\] and \[M(e_0+n;(a_1,b_1),\dots,(a_j,b_j-na_j),\dots,(a_k,b_k))\] represent the same homeomorphic 3-manifold respectively: see \cite[Chapter 5.3]{Gompf-Stipsicz}. Hence any Seifert fibered rational homology 3-sphere admits a canonical Seifert form, \[(e_0;(a_1,b_1),(a_2,b_2),\dots,(a_k,b_k))\] such that $a_i>b_i>0$ for all $1\leq i\leq k$. We refer the form by the \emph{normal form} of a Seifert fibered rational homology 3-sphere. We call the 4-manifold, obtained from the normal form of a Seifert fibered 3-manifold, the \emph{canonical plumbed 4-manifold} of the Seifert fibered 3-manifold.

\begin{lemma}
	The intersection form of the corresponding plumbed 4-manifold of a normal form $M(e_0; (a_1,b_1),\dots,(a_k,b_k))$ is negative definite if and only if \[e(M)\coloneqq e_0+\frac{b_1}{a_1}+\dots+\frac{b_k}{a_k}<0.\]
\end{lemma}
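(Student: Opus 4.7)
The plan is to split off the central vertex in the plumbing intersection matrix and reduce the question to a one-variable Schur complement. Order the basis of $H_2$ of the plumbed $4$-manifold as $(v_0; v_1^1,\ldots,v_1^{l_1}; \ldots; v_k^1,\ldots,v_k^{l_k})$, with $v_0$ the central vertex and $v_i^j$ the $j$-th vertex of the $i$-th arm. In this basis the intersection form takes the block shape
\[ Q = \begin{pmatrix} e_0 & c^T \\ c & B \end{pmatrix}, \]
where $B$ is block diagonal with $i$-th block $B_i$ the tridiagonal matrix with diagonal $(-\alpha_i^1,\ldots,-\alpha_i^{l_i})$ and $+1$'s above and below, and $c$ is the column vector whose only nonzero entries are a $1$ in each row indexed by some $v_i^1$.

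The first step is to verify that $B$ is itself negative definite. Writing $B_i = -T_i$ with $T_i$ tridiagonal of diagonal $(\alpha_i^1,\ldots,\alpha_i^{l_i})$, the leading principal minors $d_m$ of $T_i$ obey the recursion $d_m = \alpha_i^m d_{m-1} - d_{m-2}$ with $d_0 = 1$. Because each $\alpha_i^j \geq 2$, a straightforward induction yields $d_m > d_{m-1} > 0$ for all $m$, so by Sylvester's criterion each $T_i$ is positive definite and hence $B_i$, and thus $B$, is negative definite.

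Next, with $B$ negative definite and therefore invertible, the Schur complement criterion says that $Q$ is negative definite if and only if the scalar $e_0 - c^T B^{-1} c$ is negative. The block-diagonality of $B$ and the support of $c$ reduce this to
\[ c^T B^{-1} c = \sum_{i=1}^k (B_i^{-1})_{1,1} = \sum_{i=1}^k \frac{\det B_i^{(1)}}{\det B_i}, \]
where $B_i^{(1)}$ denotes $B_i$ with its first row and column removed. The key input is the classical Hirzebruch-Jung identity: if $a_i/b_i = [\alpha_i^1,\ldots,\alpha_i^{l_i}]$ in lowest terms (which is guaranteed by the normal form), then $\det T_i = a_i$ and $\det T_i^{(1)} = b_i$. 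This follows from the same tridiagonal recursion together with the continued-fraction formula $[\beta_1,\ldots,\beta_m] = \det T(\beta_1,\ldots,\beta_m)/\det T(\beta_2,\ldots,\beta_m)$. Tracking the signs in $B_i = -T_i$ yields $(B_i^{-1})_{1,1} = -b_i/a_i$, so the Schur complement becomes
\[ e_0 - c^T B^{-1} c = e_0 + \sum_{i=1}^k \frac{b_i}{a_i} = e(M), \]
and the stated equivalence follows at once.

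The main obstacle I expect is the tridiagonal-continued-fraction identity linking $(B_i^{-1})_{1,1}$ to $-b_i/a_i$; this is the arithmetic step that converts the a priori linear-algebraic Schur complement into the topological quantity $e(M)$. The remainder is routine bookkeeping with Sylvester's criterion and the Schur complement formula.
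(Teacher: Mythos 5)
Your proof is correct and is in essence the same argument the paper gestures at: the paper's one-line proof "diagonalize over $\Q$" is exactly the Schur-complement/Gaussian-elimination reduction you carry out, with the continued-fraction identity $\det T_i = a_i$, $\det T_i^{(1)} = b_i$ turning the eliminated central entry into $e(M)$. You have simply supplied the details (negative definiteness of the arms via Sylvester's criterion, and the tridiagonal--continued-fraction identity) that the paper leaves implicit.
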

\begin{proof}
	This is directly obtained by diagonalizing the corresponding intersection form of the plumbed 4-manifold after extending over $\Q$.
\end{proof}
We refer $e(M)$ by the \emph{Euler number} of a Seifert form $M$. Notice that the Euler number is in fact an invariant for the Seifert fibered 3-manifold $Y$, and $e(-Y)=-e(Y)$.

\begin{proposition}\label{prop:both}
	Any Seifert fibered rational homology 3-sphere can bound positive or negative definite smooth 4-manifolds.
\end{proposition}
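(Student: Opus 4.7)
The plan is to reduce the statement to the negative definite case via orientation reversal, leveraging the lemma just established that characterizes when the canonical plumbing from a normal form is negative definite in terms of the sign of the Euler number $e(Y)$.

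First, I would recall the standard fact that a Seifert fibered $3$-manifold $Y$ is a rational homology sphere if and only if $e(Y)\ne 0$; this can be read off from the presentation matrix for $H_1$ arising from the plumbing diagram in Figure~\ref{fig:Seifert}, whose determinant is (up to sign) $a_1\cdots a_k\cdot e(Y)$. Combined with the preceding lemma, this already handles the case $e(Y)<0$: the canonical plumbed $4$-manifold is then a negative definite smooth $4$-manifold bounded by $Y$.

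The remaining case $e(Y)>0$ I would handle by identifying a normal form for $-Y$ and computing its Euler number. Orientation reversal sends $M(e_0;(a_1,b_1),\dots,(a_k,b_k))$ to $M(-e_0;(a_1,-b_1),\dots,(a_k,-b_k))$. Applying the Rolfsen-type move $(a_j,b_j)\leftrightarrow(a_j,b_j+a_j)$, $e_0\mapsto e_0-1$ once at each singular fiber converts $(a_i,-b_i)$ to $(a_i,a_i-b_i)$, where $a_i>a_i-b_i>0$ because $a_i>b_i>0$; simultaneously the base Euler number becomes $-e_0-k$. Thus
\[
-Y \;=\; M\bigl(-e_0-k;\,(a_1,a_1-b_1),\dots,(a_k,a_k-b_k)\bigr)
\]
is in normal form, and a direct computation gives
\[
e(-Y) \;=\; -e_0-k+\sum_{i=1}^k\frac{a_i-b_i}{a_i} \;=\; -e_0-\sum_{i=1}^k\frac{b_i}{a_i} \;=\; -e(Y).
\]
Hence the canonical plumbing associated to this normal form of $-Y$ is negative definite by the preceding lemma, so $-Y$ bounds a negative definite smooth $4$-manifold, and reversing orientation of that $4$-manifold exhibits $Y$ as the boundary of a positive definite smooth $4$-manifold.

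Putting the three cases together ($e(Y)<0$, $e(Y)>0$, and $e(Y)\ne 0$ since $Y$ is a rational homology sphere) completes the argument. There is no real obstacle here; the only item that needs care is the bookkeeping in Step 2, namely verifying that the move used to normalize $-Y$ lands in the range $a_i>a_i-b_i>0$ and tracking how $e_0$ shifts, which then makes the identity $e(-Y)=-e(Y)$ transparent.
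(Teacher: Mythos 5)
Your proposal is correct and follows essentially the same route as the paper: observe $e(Y)\neq 0$ for a rational homology sphere, use the preceding lemma when $e(Y)<0$, and otherwise pass to $-Y$ using $e(-Y)=-e(Y)$. The only difference is that you explicitly verify the normal form of $-Y$ and the identity $e(-Y)=-e(Y)$, which the paper simply cites as a known property of the Euler number.
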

\begin{proof}
	Let $Y$ be a Seifert fibered rational homology 3-sphere. By the linking pairing of the surgery diagram in Figure \ref{fig:Seifert}, a rational homology 3-sphere $Y$ has a nonzero Euler number. If $e(Y)<0$, then $Y$ bounds a negative definite 4-manifold by the above lemma. If $e(Y)>0$, then $-Y$ bounds a negative definite 4-manifold since $e(-Y)=-e(Y)<0$. Thus $Y$ bounds a positive definite one.
\end{proof}
\begin{figure}[tb]
	\centering
	\includegraphics[width=0.9\textwidth]{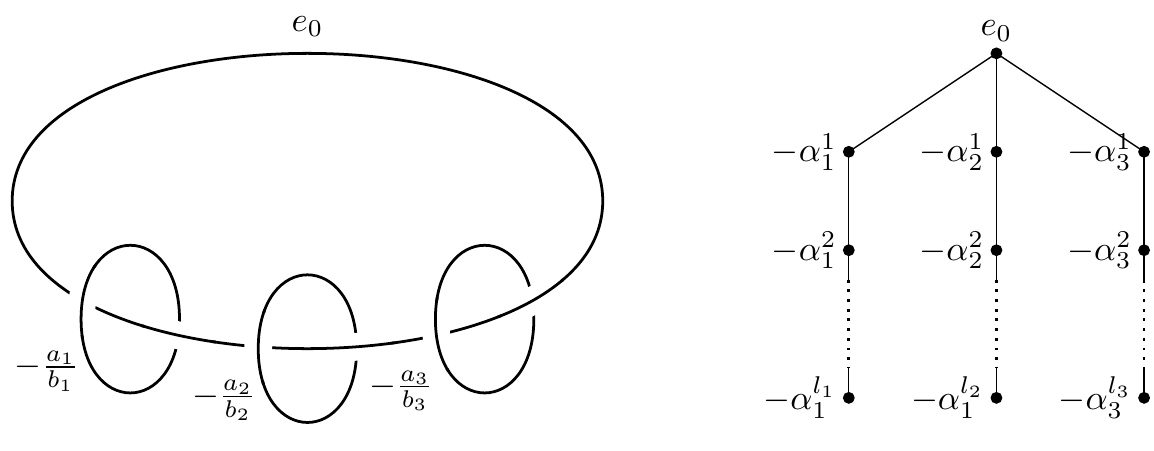}
	\caption{The surgery diagram and plumbing diagram of Seifert manifold $M(e_0; (a_1,b_1),(a_2,b_2),(a_3,b_3))$.}
	\label{fig:Seifert}
\end{figure}

Now we introduce a condition for a Seifert fibered rational homology 3-sphere to bound definite 4-manifolds of both signs.
\begin{proposition}\label{prop:Seifert}
	Let $Y$ be a Seifert fibered rational homology 3-sphere of the normal form 
	\[(e_0; (a_1,b_2),\dots,(a_k,b_k)).\] 
	If $e_0+k\leq0$, then $Y$ bounds both positive and negative definite smooth 4-manifolds, i.e. both $\inter(Y)$ and $\inter(-Y)$ are not empty.
\end{proposition}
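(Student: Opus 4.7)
The proof naturally splits into two parts corresponding to $\inter(Y)\neq\emptyset$ and $\inter(-Y)\neq\emptyset$.

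\smallskip

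\noindent\textbf{Easy direction: $\inter(Y)\neq\emptyset$.} Since the normal form requires $0<b_i<a_i$, we have $0<\sum_i b_i/a_i <k$, hence
\[
 e(Y)=e_0+\sum_{i=1}^k \frac{b_i}{a_i}<e_0+k\leq 0.
\]
By the preceding lemma, the canonical plumbed $4$-manifold $P$ of the normal form $(e_0;(a_1,b_1),\ldots,(a_k,b_k))$ is negative definite and has boundary $Y$, so $[Q_P]\in\inter(Y)$.

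\smallskip

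\noindent\textbf{Harder direction: $\inter(-Y)\neq\emptyset$.} Here the normal form of $-Y$ is $(-e_0-k;(a_1,a_1-b_1),\ldots,(a_k,a_k-b_k))$ with central weight $c=-e_0-k\geq 0$ and Euler number $e(-Y)=-e(Y)>0$, so the canonical plumbing of $-Y$ is \emph{not} negative definite. The plan is to construct a negative definite $4$-manifold $W$ bounded by $-Y$ by embedding $P$ into a standard $4$-manifold and taking the complement.

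First, I would show that the lattice $Q_P$ embeds into $\langle-1\rangle^N$ for some $N$, by an explicit construction: represent the central vector $v_0$ (satisfying $v_0\cdot v_0=e_0$) as $v_0=e_1+e_2+\cdots+e_{|e_0|}$, a sum of $|e_0|$ standard basis vectors in $\langle-1\rangle^N$. The condition $e_0+k\leq 0$, i.e.\ $|e_0|\geq k$, is exactly what allows each of the $k$ arm-start vectors $v_i^1$ to share a \emph{distinct} basis vector with $v_0$ (with opposite sign), thereby giving the required adjacency $v_0\cdot v_i^1=1$. The remaining weight of $v_i^1$ is absorbed into fresh basis vectors, and the embedding is then extended recursively along each arm using additional basis vectors for each $v_i^j$, preserving the tridiagonal adjacency of the arms and orthogonality across different arms.

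Next, I would smoothly realize this embedding: the $4$-manifold $X=\#_{i=1}^N\overline{\mathbb{CP}^2}$ has intersection form $\langle-1\rangle^N$, and each vector produced above can be realized as a smoothly embedded sphere in $X$ (using standard blow-up/handleslide arguments for such star-shaped plumbings), giving an embedding of $P$ as a tubular neighborhood of a plumbing of spheres in $X$. Setting $W:=X\setminus\mathrm{int}(P)$ yields a smooth $4$-manifold with $\partial W=-Y$ and intersection form $Q_W$ equal to the orthogonal complement of $Q_P$ in $\langle-1\rangle^N$. As a sublattice of a negative definite lattice, $Q_W$ is negative definite, so $[Q_W]\in\inter(-Y)$.

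\smallskip

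\noindent\textbf{Main obstacle.} The purely combinatorial embedding step is straightforward once $|e_0|\geq k$ provides enough basis vectors at the center. The real technical burden is the smooth realization: showing that the algebraic embedding $Q_P\hookrightarrow\langle-1\rangle^N$ can be realized by a plumbing of smoothly embedded spheres in $\#N\overline{\mathbb{CP}^2}$. For the star-shaped plumbings arising from Seifert normal forms this follows from standard Kirby-calculus constructions (each vector in the embedding corresponds to an embedded sphere obtained from the exceptional divisors by a controlled sequence of blow-ups and handle slides), but care is needed to check that the adjacencies of the plumbing graph are realized by transverse single intersections. The hypothesis $e_0+k\leq 0$ is sharp in this construction, as it is precisely the inequality making the central sphere representation $v_0=e_1+\cdots+e_{|e_0|}$ have enough summands to accommodate all $k$ arms.
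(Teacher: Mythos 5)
Your first half (showing $e(Y)=e_0+\sum b_i/a_i<e_0+k\le 0$, so the canonical plumbing of the normal form is negative definite) matches the paper exactly and is fine. The second half has a genuine gap at precisely the step you flag as the ``real technical burden'': passing from the algebraic embedding $Q_P\hookrightarrow\langle-1\rangle^N$ to a smooth embedding of the plumbed $4$-manifold $P$ into $\#N\overline{\mathbb{CP}^2}$. Representing each individual lattice vector by a smoothly embedded sphere (e.g.\ by tubing exceptional spheres) is easy, but that is far from sufficient: you need all the spheres of the configuration simultaneously embedded, pairwise disjoint when the plumbing graph has no edge, and meeting in exactly one positive transverse point when it does, so that a regular neighborhood is diffeomorphic to $P$. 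Homological data only controls algebraic, not geometric, intersection numbers, and there is no general principle upgrading a lattice embedding to a smooth embedding of the plumbing --- indeed, a smooth embedding of $P$ into a closed negative definite manifold already yields the desired filling of $-Y$ as its complement, so this unproved step is essentially equivalent to the proposition itself. For linear plumbings the analogous realization is a known Kirby-calculus fact about lens spaces, but for star-shaped graphs you would have to actually construct the configuration, which you do not.

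The paper sidesteps this entirely by working on the other side of the orientation: $-Y\cong M(-e_0-k;(a_1,a_1-b_1),\dots,(a_k,a_k-b_k))$ has a canonical plumbed $4$-manifold $X$ with $b_2^+(X)=1$, the positive part coming from the central sphere of self-intersection $-e_0-k\ge 0$. Blowing up $-e_0-k$ points on that sphere yields a homologically essential embedded sphere of square $0$ in $X\#(-e_0-k)\overline{\mathbb{CP}^2}$, and surgering it out (replacing $S^2\times D^2$ by $D^3\times S^1$) drops $b_2^+$ by one, producing a negative definite smooth $4$-manifold bounded by $-Y$, i.e.\ a positive definite filling of $Y$. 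This is an explicit handle-level construction with no lattice-realization issue, and your hypothesis $e_0+k\le 0$ enters exactly as the non-negativity of the central weight of the reversed plumbing. I would replace your second half with this argument (or else supply an actual Kirby-calculus realization of your sphere configuration, which is the missing content).
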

\begin{proof}
	Let $Y$ be a Seifert 3-manifold with the normal form $(e_0; (a_1,b_2),\dots,(a_k,b_k))$ such that $e_0+k\leq0$. By the previous proposition, $Y$ bounds a negative definite 4-manifold. To find a positive definite bounding of $Y$, consider the plumbed 4-manifold $X$ corresponding to $-Y\cong M(-e_0-k;(a_1,a_1-b_1),\dots,(a_k,a_k-b_k))$. Note that $b_2^{+}(X)=1$. By blowing up $(e_0+k)$ points on the sphere in $X$ corresponding to the central vertex, we get a sphere with self intersection $0$ in $X\#(e_0+k)\overline{\mathbb{CP}^2}$. By doing a surgery on this sphere, we obtain a desired negative definite 4-manifold. More precisely, we remove the interior of the tubular neighborhood of the sphere, $S^2\times D^2 \subset X\#(e_0+k)\overline{\mathbb{CP}^2}$, and glue $D^3\times S^1$ along the boundary, and it reduces $b_2^+(X\#(e_0+k)\overline{\mathbb{CP}^2})$ by $1$.
\end{proof}

\begin{remark}
	This proposition can be alternatively proved by the fact that these Seifert fibered spaces can be obtained by the branched double covers of $S^3$ along alternating Montesinos links. See \cite[Section 4]{Manolescu-Owens:2007}.
\end{remark}

Note that the condition in Proposition \ref{prop:Seifert} is not a necessary condition. For example, the Brieskorn manifold, \[\Sigma(2, 3, 6n+1)\cong M(-1, ({2},{1}), ({3},{1}), ({6n+1},{1}))\] bounds both negative and positive definite 4-manifolds since $e(M)<0$ and it can be obtained by $(+1)$-surgery of $S^3$ on the $n$-twist knot.

On the other hand, the inequality $e_0+k\leq0$ is sharp since the Brieskorn manifold $\Sigma(2, 3, 5)\cong M(-2, ({2},{1}), ({3}, {2}), ({5},{4}))$, of which $e_0+k=1$, cannot bound any positive definite 4-manifold by the constraint from Donaldson's diagonalization theorem. Note that $\Sigma(2, 3, 5)$ is the Poincar\'e homology sphere $\Sigma$ in Example \ref{ex:poincare}.

\subsection{Spherical 3-manifolds}
A 3-manifold is spherical if it admits a metric of constant curvature $+1$. It is well known that a spherical 3-manifold has a finite fundamental group, and conversely a closed 3-manifold with a finite fundamental group is spherical by the elliptization theorem due to Perelman.

Spherical 3-manifolds are divided into 5-types: $\mathbf{C}$ (cyclic), $\mathbf{D}$ (dihedral), $\mathbf{T}$ (tetrahedral), $\mathbf{O}$ (octahedral) and $\mathbf{I}$ (icosahedral) type, in terms of their fundamental groups. Note that spherical 3-manifolds are Seifert fibered, and their normalized Seifert forms are given as follows, up to the orientation of the manifolds \cite{Seifert:1933}:
\begin{itemize}
	\item Type $\mathbf{C}$; $M(e_0;(a_1,b_1))$
	\item Type $\mathbf{D}$; $M(e_0;(2,1),(2,1),(a_3,b_3))$
	\item Type $\mathbf{T}$; $M(e_0;(2,1),(3,b_2),(3,b_3))$
	\item Type $\mathbf{O}$; $M(e_0;(2,1),(3,b_2),(4,b_3))$
	\item Type $\mathbf{I}$; $M(e_0;(2,1),(3,b_2),(5,b_3))$,
\end{itemize}
where $e_0\leq-2$, $a_i>b_i>0$ and $gcd(a_i,b_i)=1$. Notice that the manifolds are oriented so that their canonical plumbed 4-manifolds are negative definite.

We claim that most of the spherical 3-manifolds can bound smooth definite 4-manifolds of both signs, except the following cases:
\[T_1=M(-2;(2,1),(3,2),(3,2)),\] 
\[O_1=M(-2;(2,1),(3,2),(4,3)),\] 
\[I_1=M(-2;(2,1),(3,2),(5,4)),\] 
and 
\[I_7=M(-2;(2,1),(3,2),(5,3)).\]
Remark that we follow the notations of Bhupal and Ono in \cite{Bhupal-Ono:2012} for this class of 3-manifolds.
\begin{proposition}
	The manifolds $T_1$, $O_1$, $I_1$ and $I_7$ cannot bound a positive definite smooth 4-manifolds.
\end{proposition}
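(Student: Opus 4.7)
The plan is to apply the Donaldson diagonalization obstruction recalled in Section~\ref{sec:restrictions}. Suppose, toward a contradiction, that $Y\in\{T_1,O_1,I_1,I_7\}$ bounds a positive definite smooth 4-manifold $W$. Since $Y$ is already written in normal form, its canonical plumbed 4-manifold $P(Y)$ is negative definite, so $P(Y)\cup_Y(-W)$ is a closed, smooth, negative definite 4-manifold. By Donaldson's theorem its intersection form is standard, and the Mayer--Vietoris argument from Section~\ref{sec:restrictions} (using that $Y$ is a rational homology 3-sphere) then yields an isometric embedding of the plumbing lattice $Q_{P(Y)}$ into $\langle-1\rangle^N$ for some $N$. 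The whole task reduces to ruling out such an embedding for each of the four lattices.

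Next, I would exploit a common structural feature of the four plumbing graphs. In every case $e_0=-2$ and the first two Seifert pairs are $(2,1)$ and $(3,2)$, so each plumbing contains a sub-tree consisting of a central vertex $v_0$ of weight $-2$, a single $-2$-vertex $v_1$ from $(2,1)$, and a two-vertex chain $v_2,v_3$ of weights $-2,-2$ from $3/2=[2,2]$. The third arm begins with a $-2$-vertex $v_4$ in all four cases, followed by $v_5$ whose weight equals $-2$ for $T_1,O_1,I_1$ (from $3/2=[2,2]$, $4/3=[2,2,2]$, and $5/4=[2,2,2,2]$ respectively) and $-3$ for $I_7$ (from $5/3=[2,3]$). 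The plan is to show that this six-vertex sub-configuration alone already fails to embed in any $\langle-1\rangle^N$.

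The main computation goes as follows. Let $\{f_1,\ldots,f_N\}$ be an orthonormal basis with $f_i\cdot f_j=-\delta_{ij}$. Since any vector of square $-2$ in $\langle-1\rangle^N$ must be of the form $\pm f_i\pm f_j$ with $i\neq j$, I would step through $v_0,v_1,v_2,v_3,v_4$ in order, using the prescribed inner products together with the freedom to permute the $f_i$'s and flip signs, to argue that the embedding is rigid up to these symmetries. Explicitly, one is led after some case analysis to
\[
v_0=f_1+f_2,\quad v_1=-f_1+f_3,\quad v_2=-f_2+f_4,\quad v_3=-f_4+f_5,\quad v_4=-f_1-f_3;
\]
the only non-trivial branching is at $v_2$, where the alternative $v_2=-f_1-f_3$ must be ruled out because it leaves no admissible choice for $v_3$. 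Writing $v_5=\sum_j a_{5j}f_j$, the relations $v_5\cdot v_0=v_5\cdot v_1=0$ force $a_{52}=-a_{51}$ and $a_{53}=a_{51}$, while $v_5\cdot v_4=1$ gives $a_{51}+a_{53}=1$; hence $2a_{51}=1$, which has no integer solution. The square of $v_5$ plays no role in this contradiction, so the argument applies uniformly to $T_1,O_1,I_1$ (where $v_5^2=-2$) and to $I_7$ (where $v_5^2=-3$).

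The main obstacle is the rigidity claim for $v_0,\ldots,v_4$: it requires careful bookkeeping of the branching at $v_2$ and $v_4$ and exploiting the isometries of $\langle-1\rangle^N$. Once this is in hand, the $v_5$ step is a one-line parity obstruction. As an alternative conceptual route, one can observe that the plumbing lattices for $T_1,O_1,I_1$ are precisely $-E_6,-E_7,-E_8$ (their plumbing graphs are the corresponding Dynkin diagrams with all weights $-2$) and invoke the classical fact that none of these embeds in a standard negative diagonal lattice, and then dispose of $I_7$ separately by the direct Gram-matrix computation above.
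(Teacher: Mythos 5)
Your proof is correct, and it shares the paper's overall skeleton (cap off the positive definite $W$ with the canonical negative definite plumbing, apply Donaldson's theorem, and reduce to a lattice non-embedding statement), but it diverges at the key step. The paper disposes of the non-embedding by quoting a general criterion of Lecuona and Lisca \cite[Lemma 3.3]{Lecuona_Lisca_Stein}, which applies to $M(-2;(a_1,b_1),(a_2,b_2),(a_3,b_3))$ whenever (after reordering the arms) $1<\frac{a_1}{b_1}\leq\frac{a_2}{b_2}\leq\frac{a_3}{b_3}$ and $\frac{b_2}{a_2}+\frac{b_3}{a_3}>1$; one checks that all four manifolds satisfy this. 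You instead prove the non-embedding by hand, isolating the common six-vertex sub-tree and showing its image in $\langle-1\rangle^N$ is rigid up to signed permutations, with the final parity contradiction $2a_{51}=1$ independent of $v_5\cdot v_5$ (which is exactly what lets the same computation cover $I_7$, whose last weight is $-3$). I checked the case analysis: the branch $v_2=-f_1-f_3$ is indeed killed by $v_3$, the alternative placement of $v_4$ forces three nonzero coordinates and is killed by $v_4\cdot v_4=-2$, and the $v_5$ step needs no hypothesis on its square. Your argument is therefore self-contained and slightly more elementary, essentially reproving that $-E_6$ (and its $-E_7$, $-E_8$ extensions, plus the $I_7$ variant) admits no embedding into a diagonal lattice, whereas the paper's citation buys brevity and a criterion applicable to a much broader family of Seifert forms. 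Your closing remark that $T_1,O_1,I_1$ give exactly $-E_6,-E_7,-E_8$ is also consistent with the paper, which uses the $-E_8$ non-embedding in Example \ref{ex:poincare} by citing the same Lecuona--Lisca lemma.
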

\begin{proof}
	In {\cite[Lemma 3.3]{Lecuona_Lisca_Stein}}, Lecuona and Lisca showed that if $1<\frac{a_1}{b_1}\leq \frac{a_2}{b_2}\leq \frac{a_3}{b_3}$ and $1<\frac{b_2}{a_2}+\frac{b_3}{a_3}$, then the intersection lattice of the plumbing associated to $M(-2;(a_1,b_1),(a_2,b_2),(a_3,b_3))$ cannot be embedded into a negative definite standard lattice. 
	
	Observe that the manifolds, $T_1$, $O_1$, $I_1$ and $I_7$ satisfy the conditions of the lemma. Hence these manifolds cannot bound any positive definite 4-manifolds by the standard argument using Donaldson's theorem.
\end{proof}

\begin{proposition}\label{prop:spherical}
	Any spherical 3-manifolds except $T_1$, $O_1$, $I_1$ and $I_7$ can bound both positive and negative definite smooth 4-manifolds.
\end{proposition}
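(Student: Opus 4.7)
The plan is to proceed by case analysis based on the normal form of the spherical 3-manifold, and to reduce as many cases as possible to Proposition \ref{prop:Seifert}.

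First, I would invoke Proposition \ref{prop:Seifert} whenever $e_0+k\leq 0$. Since $k\leq 3$ and $e_0\leq-2$ in the normal form of a spherical 3-manifold, this inequality holds automatically for all lens spaces (Type $\mathbf{C}$, where $k=1$) as well as all manifolds of Types $\mathbf{D}$, $\mathbf{T}$, $\mathbf{O}$, $\mathbf{I}$ with $e_0\leq-3$. For these, both $\inter(Y)$ and $\inter(-Y)$ are non-empty, and the proposition is established.

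The remaining cases are those of the form $Y=M(-2;(2,1),(a_2,b_2),(a_3,b_3))$. Since the normal form guarantees $e(Y)<0$, the canonical plumbing already gives a negative definite $4$-manifold bounded by $Y$, so $\inter(Y)\neq\emptyset$ for free. The crucial remaining task is therefore to show that $\inter(-Y)\neq\emptyset$ for each non-exceptional $Y$ in this family. My proposed strategy is to identify such $Y$ as the double branched cover of a quasi-alternating Montesinos link, as alluded to in the remark following Proposition \ref{prop:Seifert}. By the standard argument from \cite[Proof of Lemma 3.6]{Ozsvath-Szabo:2005-1}, the double branched cover of a quasi-alternating link bounds definite $4$-manifolds of both signs, which would yield the desired positive definite filling of $Y$. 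The non-exceptional condition, namely the failure of the Lecuona--Lisca obstruction $b_2/a_2+b_3/a_3>1$ (with $a_i/b_i$ in non-decreasing order), should correspond precisely to the associated Montesinos link being quasi-alternating.

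The main obstacle is to verify this correspondence rigorously, especially for the infinite family of prism manifolds (Type $\mathbf{D}$ with $e_0=-2$), where a uniform argument is required rather than finite case analysis. A complementary fall-back strategy would be to produce explicit positive definite fillings case by case: for Types $\mathbf{T}$, $\mathbf{O}$, $\mathbf{I}$ with $e_0=-2$, the non-exceptional list is finite and each case can be handled by an explicit surgery or plumbing diagram; for the prism manifolds, a uniform construction via, for instance, Dehn surgery on two-bridge knots should work, since these spherical manifolds are known to arise as such surgeries. Either way, once $\inter(-Y)\neq\emptyset$ is established for every non-exceptional $Y$ in the $e_0=-2$ family, combined with the easy cases above, the proposition follows.
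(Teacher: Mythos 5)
Your reduction is the same as the paper's: Proposition \ref{prop:Seifert} disposes of every normal form with $e_0+k\leq 0$ (in particular all lens spaces and all $k=3$ types with $e_0\leq -3$), and for the remaining forms $M(-2;(2,1),(a_2,b_2),(a_3,b_3))$ the canonical plumbing already gives the negative definite filling, so only the positive definite filling is at issue. The gap is in how you produce that filling. Your primary strategy --- that the non-exceptional $e_0=-2$ manifolds are exactly the double branched covers of quasi-alternating Montesinos links --- is not just unverified, it is false: the paper's final proposition shows that the dihedral manifolds $D_{n,n-1}=M(-2;(2,1),(2,1),(n-1,n-2))$ cannot bound a positive definite $4$-manifold with trivial $H_1$ and hence are not double branched covers of quasi-alternating links, yet these form an infinite non-exceptional subfamily of precisely the $e_0=-2$ cases you must handle. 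So the failure of the Lecuona--Lisca embedding obstruction does not correspond to quasi-alternation, and this route cannot close the argument. Your fall-back is also not yet a proof: the finite lists for types $\mathbf{T}$, $\mathbf{O}$, $\mathbf{I}$ are only asserted to be doable by ``an explicit surgery or plumbing diagram,'' and the proposed uniform treatment of prism manifolds via Dehn surgery on two-bridge knots is unsupported (not every prism manifold arises as surgery on a knot in $S^3$, and even when one does, a single surgery trace is definite of only one sign, so it does not by itself supply fillings of both signs).

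For comparison, the paper's construction in the hard cases is concrete: for $b=2$ it takes the canonical plumbing $X$ of $-D_{n,q}\cong M(-1;(2,1),(2,1),(q,n-q))$, which has $b_2^+(X)=1$, blows down twice to exhibit a homologically essential embedded sphere of self-intersection $0$, and surgers it out (replacing $S^2\times D^2$ by $D^3\times S^1$), killing $b_2^+$ and producing a negative definite filling of $-Y$, i.e.\ a positive definite filling of $Y$; the $e_0=-2$ tetrahedral, octahedral and icosahedral cases are handled by the same blow-down-and-surger argument, and it is exactly there that the four exceptions $T_1$, $O_1$, $I_1$, $I_7$ drop out. To repair your write-up you would need either to carry out such an explicit construction for every non-exceptional $e_0=-2$ form (including a uniform one for the infinite prism family), or to find a genuinely different source of positive definite fillings; quasi-alternation cannot be it.
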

\begin{proof}
	Since we orient a spherical 3-manifold to bound a natural negative definite 4-manifold, it is enough to construct a positive definite one with the given boundary $Y$. Type-$\mathbf{C}$ manifolds are lens spaces, and it is well known that they bound positive definite 4-manifolds either.
	
	Let $(n,q)$ be a pair of integers such that $1<q< n$ and $\gcd(n,q)=1$, and $\frac{n}{q}=[b,b_1,\dots,b_{r-1},b_r]$. We denote by $D_{n, q}$, the dihedral manifold \[M(-b;(2,1),(2,1),(q,bq-n)).\] The canonical plumbed 4-manifold of $D_{n,q}$ is given in Figure \ref{fig:dihedral}. If $b>2$, there is a positive definite bounding by Proposition \ref{prop:Seifert}. In the case $b=2$, we can check that $-D_{n,q}\cong M(-1;(2,1),(2,1),(q,n-q))$, and the corresponding plumbed 4-manifold $X$ satisfies $b_2^+(X)=1$. As seen in Figure \ref{fig:dihedral2}, we get a 2-sphere with self-intersection 0 after blowing down twice from $X$, and the sphere intersects algebraically twice with the sphere of self-intersection $-c+3$. Hence the sphere with self-intersection 0 is homologically essential, and we obtain a desired negative definite 4-manifold by a surgery along the sphere. 
	
	In the other cases (tetrahedral, octahedral and icosahedral cases), we can apply similar argument except the 4-cases.
\end{proof}

\begin{figure}[!tb]
	\centering
	\includegraphics[width=0.7\textwidth]{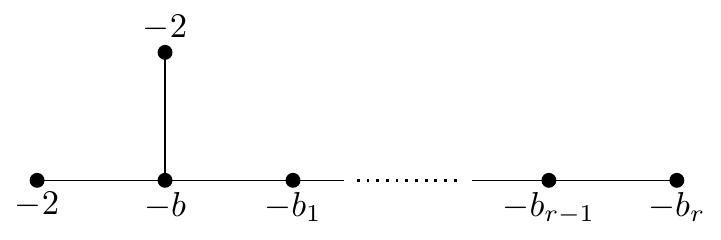}
	\caption{The plumbing graph of the canonical plumbed 4-manifold of $D_{n,q}$, 
		where $\frac{n}{q}=[b,b_1,\dots,b_{r-1},b_r]$.}
	\label{fig:dihedral}
\end{figure}

\begin{figure}[!tb]
	\centering
	\includegraphics[width=0.8\textwidth]{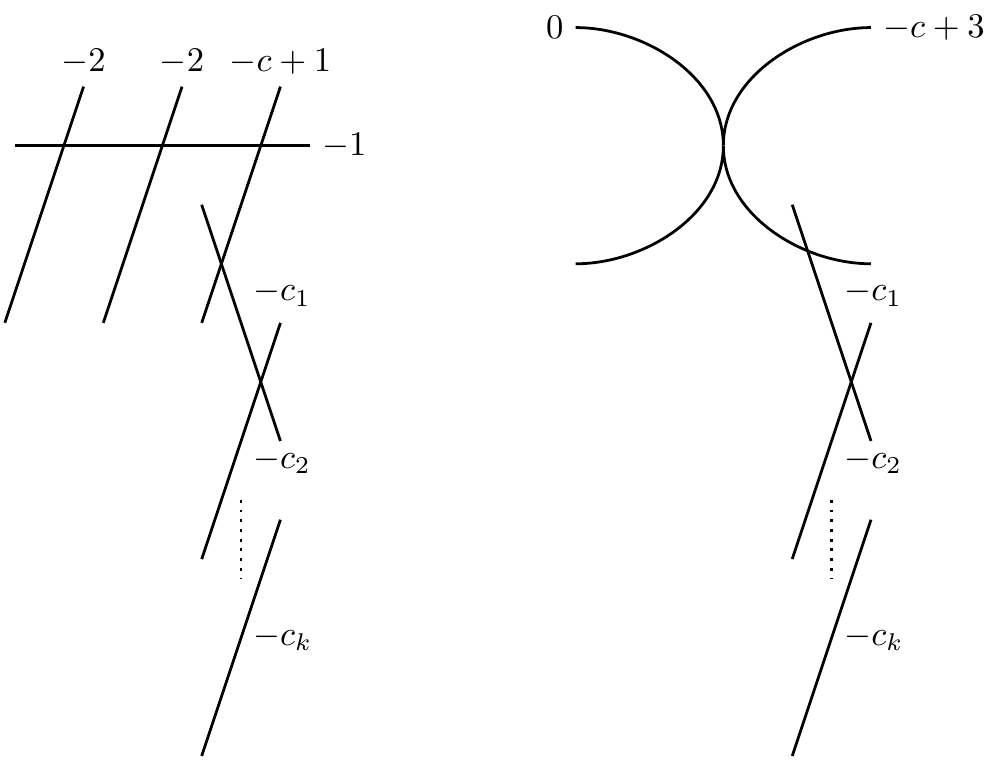}
	\caption{The canonical plumbed 4-manifold of $-D_{n,q}$ and the configuration after blow-down twice,
		where $\frac{n}{n-q}=[c,c_1,\dots,c_k].$}
	\label{fig:dihedral2}
\end{figure}

By Proposition \ref{prop:spherical} and Theorem \ref{thm:finite}, we know that the most of the spherical manifolds have finitely many stable classes of definite lattices to bound them. Finally, we show that the exceptional cases of spherical 3-manifolds also satisfy such finiteness property.

\begin{proof}[Proof of Theorem \ref{thm:spherical}]
	Note that $I_1\cong\Sigma$, and in this case we have $|\inter(\Sigma)|\leq15$ from the lattice theoretic result in \cite{Elkies2} as mentioned in the introduction. We utilize this for the other cases.
	
	Observe that the canonical plumbed 4-manifold $X_{T_1}$ of $T_1$ can be embedded in the canonical plumbed 4-manifold $X_{\Sigma}$ of $\Sigma$. See Figure \ref{fig:em}. Let $W$ be a 4-manifold constructed by removing $X_{T_1}$ from $X_{\Sigma}$. Then $W$ is a negative definite cobordism from $T_1$ to $\Sigma$. The finiteness of $\inter(T_1)$ follows from Theorem \ref{thm:finite}.
	
	\begin{figure}[!tb]
		\centering
		\includegraphics[width=0.8\textwidth]{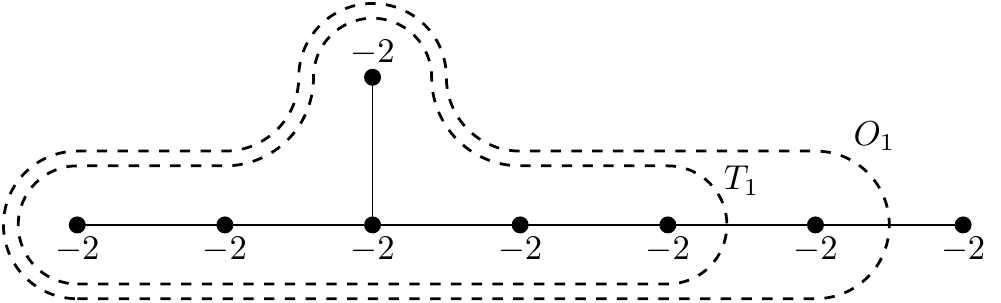}\\
		\vspace{20pt}
		\includegraphics[width=0.8\textwidth]{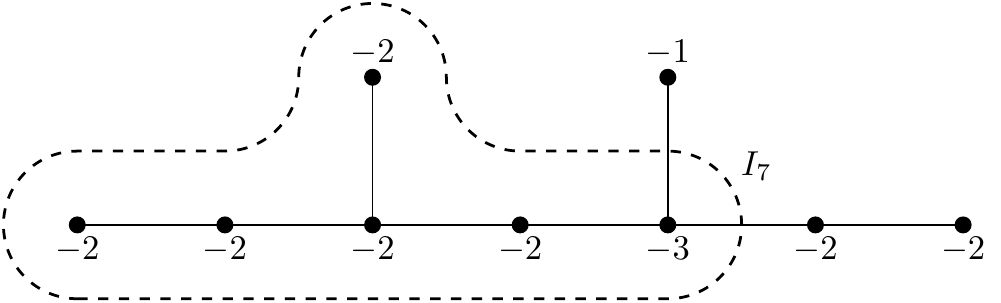}
		\caption{The embedding of the plumbed 4-manifold corresponding to the manifolds $T_1$ and $O_1$, and $I_7$ into $-E_8$-manifold and  $-E_8\#\overline{\mathbb{CP}^2}$, respectively.}
		\label{fig:em}
	\end{figure}
	
	Since the canonical plumbed 4-manifold $O_1$ is also embedded in $X_\Sigma$, we have $|\inter(O_1)|<\infty$. For the manifold $I_7$, we blow up on the sphere in $X_\Sigma$ to contain the canonical plumbed 4-manifold of $I_7$. Then the same argument works to show that $|\inter(I_7)|<\infty$.
\end{proof}

As we mentioned in the introduction, it is known that the 3-manifolds obtained by the double branched cover of quasi-alternating links in $S^3$ bound both positive and negative definite 4-manifolds with trivial $H_1$. Indeed, there are some family of Seifert fibered 3-manifolds that are not obtained by double branched cover on a quasi-alternating link but can be shown to bound definite 4-manifolds of both signs by our result. For example, the dihedral manifolds $D_{n,n-1}$ are such 3-manifolds. 

\begin{proposition}
	The dihedral manifold $D_{n,n-1}$ cannot bound a positive definite smooth 4-manifold with trivial $H_1$, and consequently cannot be obtained by the double branched cover of $S^3$ along a quasi-alternating link in $S^3$.
\end{proposition}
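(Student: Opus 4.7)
My plan is to combine Donaldson's diagonalization theorem with Ozsv\'ath-Szab\'o's correction-term inequalities (Theorem~\ref{thm:OS}). Since $\frac{n-1}{n-2}=[2,2,\dots,2]$, the canonical negative definite plumbing $X$ of $D_{n,n-1}$ is the star-shaped $D_{n+1}$-Dynkin plumbing with all weights $-2$ (a central $-2$ vertex with three legs of lengths $1,1,n-2$); in particular $H_1(X)=0$, $Q_X \cong -D_{n+1}$, and $|H_1(D_{n,n-1})|=|\det D_{n+1}|=4$. Assume for contradiction that $W$ is a positive definite smooth $4$-manifold with $\partial W=D_{n,n-1}$ and $H_1(W;\Z)=0$, and set $r=b_2(W)$. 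Then $|\det Q_W|=4$, and Donaldson's theorem applied to the closed negative definite manifold $X\cup_{D_{n,n-1}}(-W)$ identifies its intersection form with $\langle -1\rangle^{n+1+r}$, so $-D_{n+1}\oplus(-Q_W)$ embeds in $\langle -1\rangle^{n+1+r}$ as an index-$4$ sublattice.

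By the classical rigidity of the $D_m$-root system, any embedding of $-D_{n+1}$ in $\langle -1\rangle^N$ is signed-permutation equivalent to the standard one, so the primitive closure is $\langle -1\rangle^{n+1}$ and the orthogonal complement is $\langle -1\rangle^r$. Hence $-Q_W$ sits in $\langle -1\rangle^r$ as an index-$2$ sublattice; classifying these yields $Q_W\cong D_k\oplus\langle 1\rangle^{r-k}$ for some $2\le k\le r$, or $Q_W\cong\langle 4\rangle\oplus\langle 1\rangle^{r-1}$. To rule out these candidates I compute the correction terms of $D_{n,n-1}$: being spherical (hence an L-space) with almost-rational canonical plumbing (the central vertex being its unique bad vertex), $D_{n,n-1}$ has $d$-invariants given by the Ozsv\'ath-Szab\'o plumbing formula $d(D_{n,n-1},\mathfrak{t})=\max_{k\in Char_\mathfrak{t}(-D_{n+1})}\frac{k\cdot k+(n+1)}{4}$; computing the four class-minima of $|k|^2$ in $D_{n+1}^*/D_{n+1}$ to be $0,4,n+1,n+1$ gives $\{d(D_{n,n-1},\mathfrak{t})\}=\left\{\frac{n+1}{4},\frac{n-3}{4},0,0\right\}$, summing to $(n-1)/2$.

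Theorem~\ref{thm:OS} applied to the negative definite $-W$ now yields, for each $\mathfrak{s}\in\text{Spin}^c(W)$, the inequality $c_1(\mathfrak{s})_W^2\ge r+4\,d(D_{n,n-1},\mathfrak{s}|_{\partial W})$; since $H_1(W)=0$ makes the restriction of spin$^c$ classes surject onto $\text{Spin}^c(\partial W)$, summing the class-minima of $c_1^2$ over the four classes gives
\[
  \sum_\mathfrak{t}\ \min_{\mathfrak{s}\to\mathfrak{t}}\,c_1(\mathfrak{s})_W^2\ \ge\ 4r+2(n-1).
\]
A direct class-by-class computation of characteristic-covector minima for the two candidate forms yields $4r+2$ for $Q_W\cong\langle 4\rangle\oplus\langle 1\rangle^{r-1}$ and $4r-2k+4$ for $Q_W\cong D_k\oplus\langle 1\rangle^{r-k}$; both violate the displayed inequality whenever $n\ge 3$. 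This contradiction establishes the first claim. The consequence then follows from \cite[Proof of Lemma~3.6]{Ozsvath-Szabo:2005-1}, which shows that the double branched cover of $S^3$ along a quasi-alternating link bounds both positive and negative definite $4$-manifolds with trivial $H_1$. The main obstacle I anticipate is the correction-term bookkeeping, i.e.\ verifying the almost-rationality and plumbing formula for the $d$-invariants and then executing the class-wise minimum computations for each candidate $Q_W$; the root system rigidity and the index-$2$ sublattice classification are standard.
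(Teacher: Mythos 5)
Your proposal is correct in outline and I believe the computations check out, but it takes a genuinely different and considerably longer route than the paper. Both arguments share the same opening moves: the canonical plumbing $X$ of $D_{n,n-1}$ has $Q_X\cong -D_{n+1}$ with $H_1(X)=0$ and $|H^2(\partial X)|=4$; gluing to a hypothetical positive definite $W$ and applying Donaldson gives an embedding of $-D_{n+1}\oplus(-Q_W)$ into the standard lattice; and both invoke (without proof) the rigidity of the $D_{n+1}$ embedding so that $(\mathrm{Im}\,\iota)^{\perp}\cong\langle-1\rangle^{r}$. The paper then finishes in two lines: because $H_1(X)=0$, the standard Mayer--Vietoris/Poincar\'e--Lefschetz argument shows $-Q_W$ is \emph{isomorphic} to $(\mathrm{Im}\,\iota)^{\perp}$, not merely embedded in it, so $Q_W$ is unimodular; but $H_1(W)=0$ forces $H^2(Y)\cong\operatorname{coker}(Q_W)$ to vanish, contradicting $|H^2(Y)|=4$ (equivalently, $|\det Q_W|=4\neq 1$). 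You never use this complement-isomorphism lemma; instead you keep only the index-$2$ embedding $-Q_W\hookrightarrow\langle-1\rangle^{r}$, enumerate the possible overlattice situations ($D_k\oplus\langle1\rangle^{r-k}$ or $\langle4\rangle\oplus\langle1\rangle^{r-1}$), and kill each candidate with the correction-term inequality summed over the four spin$^c$ structures. This is sound --- your $d$-invariants $\{\frac{n+1}{4},\frac{n-3}{4},0,0\}$ and the class-minima $4r+2$ and $4r-2k+4$ are what I get as well, and summing over spin$^c$ structures correctly sidesteps the issue of matching cosets of $\mathrm{Char}(Q_W)/2Q_W$ to spin$^c$ structures of $Y$ --- but it imports the plumbing formula for $d$-invariants and a fair amount of bookkeeping that the paper avoids entirely. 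What your approach buys is independence from the complement-isomorphism lemma; what it costs is the reliance on Heegaard Floer input for a statement the paper derives from Donaldson's theorem and elementary algebraic topology alone. If you add the observation that $H_1(X)=0$ upgrades the embedding of $-Q_W$ into $(\mathrm{Im}\,\iota)^{\perp}$ to an isomorphism, your argument collapses to the paper's.
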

\begin{proof}
	Let $Y$ be $D_{n,n-1}$ manifold, 
	and $X$ be the canonical plumbed 4-manifold of $Y$. If $W$ is a positive definite 4-manifold bounded by $Y$, then, as usual, $Q_X\oplus -Q_W$ embeds into $\langle-1\rangle^{\rk(Q_X)+\rk(Q_W)}$. First observe that an embedding $\iota$ of $Q_X$ to a standard definite lattice is unique, up to the automorphism of the standard definite lattice, as depicted in Figure \ref{fig:dn}, in terms of the standard basis $\{e_1,e_2,\dots,e_{n+1}\}$. For this unique embedding, we have $(\text{Im}\iota)^\perp\cong\langle-1\rangle^{\rk(Q_W)}$.
	
	\begin{figure}[!htb]
		\centering
		\includegraphics{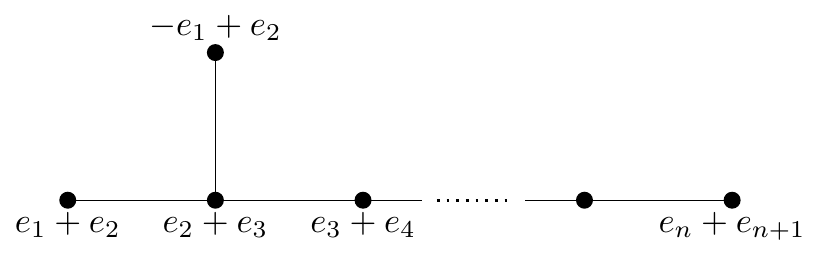}
		\caption{The canonical plumbed 4-manifold of $D_{n,n-1}$.}
		\label{fig:dn}
	\end{figure}
	
	Since $H_1(X; \Z)$ is trivial, $-Q_W$ is isomorphic to $(\text{Im}\iota)^\perp\cong\langle-1\rangle^{\rk(Q_W)}$. Suppose $H_1(W)$ is trivial. Then $H^2(W,\partial W)$ and $H^2(W)$ are torsion free, and $H^2(Y)$ have to be trivial from the following long exact sequence:
	\begin{equation*}
		\dots\rightarrow H^2(W,\partial W)\xrightarrow[Q_W]{\cong} H^2(W)\rightarrow H^2(Y)\rightarrow H^3(W,\partial W)=0
	\end{equation*}
	However, we know that $H^2(Y)$ is non-trivial.
\end{proof}
\begin{remark}
	Recently, all quasi-alternating Montesinos links are completely classified due to Issa in \cite{Issa:2017}. Since any Seifert fibered rational homology 3-sphere is the double branched cover of $S^3$ along a Montesinos link, the above proposition might be followed from his result.
\end{remark}

\section*{Acknowledgments} The authors would like to thank Jongil Park and Ki-Heon Yun for interests on this project and helpful discussions, and Matt Hedden and Brendan Owens for valuable comments on an earlier version of this paper.

\bibliographystyle{alpha}
\bibliography{Intersection_Forms}{}
\end{document}